\theoremstyle{plain}
\newtheorem{mainthm}{Theorem}
\newtheorem*{conj*}{Conjecture}
\newtheorem*{cor*}{Corollary}
\newtheorem{theorem}{Theorem}[section]
\newtheorem{proposition}[theorem]{Proposition}
\newtheorem*{notation}{Notation}
\newtheorem{lemma}[theorem]{Lemma}
\theoremstyle{definition}
\newtheorem*{def*}{Definition}
\newtheorem{remark}[theorem]{Remark}
\newtheorem{definition}[theorem]{Definition}
\newcommand{\eps}{\varepsilon}
\renewcommand{\epsilon}{\varepsilon}
\newcommand{\Z}{\mathbb{Z}}
\newcommand{\N}{\mathbb{N}}
\newcommand{\diam}{\operatorname{diam}}
\def \diam {\mbox{diam}}
\title[L-shadowing for the induced hyperspace homeomorphism]{L-shadowing for the induced hyperspace homeomorphism}
\author[Mayara Antunes, Bernardo Carvalho, Welington Cordeiro]{Mayara Antunes, Bernardo Carvalho, Welington Cordeiro}
\date{\today}
\thanks{2020 \emph{Mathematics Subject Classification}: Primary 37D10, 37B40, 37B45, Secondary 37B99.}
\keywords{Gluing-orbit, stable/unstable sets, hyperspace.}
\keywords{}
\begin{document}
\begin{abstract}
We prove that a homeomorphism $f\colon X\to X$ of a compact metric space satisfies the L-shadowing property if and only if its induced hyperspace homeomorphism $2^f$ also satisfies the L-shadowing property. In the proof, assuming only the L-shadowing property, we obtain the existence of points in the asymptotic local-product-structure with iterates approaching in a uniform rate of convergence to zero. This contrasts with the lack of uniformity of contraction on local stable/unstable sets on many homeomorphisms with the L-shadowing property.
\end{abstract}

\maketitle

\section{Introduction and statement of results}

The study of the shadowing property in dynamical systems can be viewed as an attempt to deal with approximations of orbits, usually called pseudo-orbits, which appear naturally in many situations. The pseudo-orbits do not necessarily represent the real evolution of the system, in general being associated with simulations of dynamical systems. The existence of an orbit shadowing a given pseudo-orbit means that the errors can be disregarded, the simulation is faithful, and that the pseudo-orbit indeed follows the system's evolution. The importance of the shadowing property comes from distinct research directions, such as the stability theory \cite{Walterspotp}, recurrence theory \cite{Conley}, chaotic and hyperbolic dynamics \cite{Anosov}, \cite{Bowen}, \cite{SMALE67}, and it is a significant part of the qualitative study of dynamical systems that contains several important and deep results (see the monographs \cite{Palmer} and \cite{PilyuginShadowingBook1999}).

Given a topological dynamical system $(X,f)$ formed by a compact metric space $X$ and a continuous map $f\colon X\to X$, its hyperspace dynamics is the dynamical system $(2^X, 2^f)$ where $2^X$ is the hyperspace of compact subsets of $X$ and $2^f$ is the induced map defined by $2^f(A)=f(A)$. The study of dynamics in hyperspaces comes from the idea of understanding collective dynamics. For example, if one considers population dynamics, then the hyperspace dynamics determines the collective behavior of sub-populations as an aggregate, opposed to the behavior of an individual agent. Hyperspace dynamics has applications in various disciplines of sciences, economics, and engineering (see, for example, \cite{DD}, \cite{MLR}, \cite{Rohde}, \cite{Rubinov}, \cite{Handwheel}, \cite{Electrons}, \cite{Zaslavski}). There is an extensive literature that investigates similarities and differences between the dynamics of a map and of its induced hyperspace map. The properties studied include topological entropy, sensitivity to initial conditions, recurrence properties, and specification-like properties (see \cite{KwietniakOprocha}, \cite{CanovasLopez}, \cite{Banks}, \cite{MaHouLiao}, \cite{ZhangZenLiu}, \cite{CamargoJavierGarcia}, \cite{AcostaGerardoIllanes}, \cite{Fedeli}, \cite{FloresCano}, \cite{RomanFlores}, \cite{BauerSigmund}, and \cite{ACCC}). 

Regarding the shadowing property, it is proved in \cite{FernandezGood} that $f$ has the shadowing property if, and only if, $2^f$ has the shadowing property. This means that shadowing pseudo-orbits of points is an equivalent problem to shadowing pseudo-orbits formed by compact sets. The proof of this result relies on two important facts: the first is that to prove the shadowing property, it is enough to shadow pseudo-orbits with a finite length, and the second is the fact that every compact set can be approximated (in the Hausdorff topology) by compact sets with a finite number of points. Thus, the authors in \cite{FernandezGood} approximate each pseudo-orbit of compact sets with finite length by a finite number of pseudo-orbits of points and use the shadowing property of $f$ to conclude the shadowing property of $2^f$. We note that, in this case, the shadowing compact sets obtained are finite.

In this article, we prove a similar result for the L-shadowing property. The L-shadowing property was introduced in \cite{carvalho2019positively} as a mixture of the well-known shadowing and limit-shadowing properties (see \cites{MR1442020,MR2350246,CK,C,MR3750227} for more information on limit shadowing). While in the shadowing property $\delta$-pseudo-orbits are $\eps$-shadowed, in the limit-shadowing property, pseudo-orbits with errors converging do zero, called limit-pseudo-orbits, are shadowed in the limit, i.e., with the difference between the shadowing orbit and the pseudo-orbit converging to zero. The L-shadowing property both $\eps$-shadows and limit shadows sequences that are simultaneously $\delta$-pseudo-orbits and limit pseudo-orbits (see Definition \ref{Lshad}). This property is stronger than both shadowing and limit-shadowing and implies some of the well-known features of hyperbolic systems, such as a Spectral Decomposition of the non-wandering set (see \cite{artigue2020beyond}*{Theorems B and F}). 

The known examples of homeomorphisms with the L-shadowing property are: the topologically hyperbolic homeomorphisms (expansive homeomorphisms with the shadowing property \cite{AOKIHIRAIDE}), the structurally stable diffeomorphisms (which have shadowing and an expansive non-wandering set \cite{ManeStab}), the continuum-wise hyperbolic homeomorphisms (which are continuum-wise expansive and have shadowing \cite{artigue2024continuum}), the shift map in the Hilbert cube (which is an example in an infinite dimensional metric space \cites{MR4954579,AcostaGerardoIllanes}), and an example without periodic points discussed in \cite{CK} and \cite{artigue2020beyond}. The following is the main result of this article.

\begin{mainthm}\label{hyplshad}
If $f\colon X\rightarrow X$ is a homeomorphism in a compact metric space $X$, then $f$ has the L-shadowing property if, and only if, $2^f$ has the L-shadowing property.
\end{mainthm}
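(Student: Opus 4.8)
The plan is to establish the two implications separately. The implication ``$2^f$ has the L-shadowing property $\Rightarrow$ $f$ has it'' is the soft one, while the converse carries essentially all of the difficulty and is where I expect the main obstacle to lie.

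For the easy direction I would use the isometric, equivariant embedding $X\hookrightarrow 2^X$, $x\mapsto\{x\}$, which satisfies $d_H(\{x\},\{y\})=d(x,y)$ and $2^f(\{x\})=\{f(x)\}$. Given a sequence $(x_n)_{n\in\Z}$ in $X$ that is at once a $\delta$-pseudo-orbit and a limit-pseudo-orbit for $f$, the sequence $(\{x_n\})$ has the same two properties for $2^f$. The L-shadowing of $2^f$ then yields a compact set $A$ with $d_H(f^n(A),\{x_n\})\le\eps$ for every $n$ and $d_H(f^n(A),\{x_n\})\to0$ as $n\to\pm\infty$. Since $d_H(f^n(A),\{x_n\})\le\eps$ forces $f^n(A)\subseteq B(x_n,\eps)$, any single point $a\in A$ obeys $d(f^n(a),x_n)\le d_H(f^n(A),\{x_n\})$ for all $n$, so $a$ simultaneously $\eps$-shadows and limit-shadows $(x_n)$; hence $f$ has the L-shadowing property.

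The converse is where I would spend almost all the effort, adapting to the two-sided limit setting the strategy of \cite{FernandezGood} of reducing shadowing of sets to shadowing of point-orbits. Let $(A_n)_{n\in\Z}$ be a $\delta$-pseudo-orbit and limit-pseudo-orbit for $2^f$, and write $\delta_n:=d_H(f(A_n),A_{n+1})$, so $\delta_n\le\delta$ for all $n$ and $\delta_n\to0$ as $n\to\pm\infty$. I would call a \emph{selection} a sequence $(\xi_n)$ with $\xi_n\in A_n$ and $d(f(\xi_n),\xi_{n+1})\le\delta_n$; the bound $d_H(f(A_n),A_{n+1})\le\delta_n$ makes every point of any $A_n$ extendable, both forward and backward, to a bi-infinite selection, and each selection is itself a $\delta$-pseudo-orbit and limit-pseudo-orbit for $f$. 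Applying the L-shadowing of $f$ to each selection $\xi$ gives a point $y(\xi)$ with $d(f^n(y(\xi)),\xi_n)\le\eps$ for all $n$ and $d(f^n(y(\xi)),\xi_n)\to0$ as $n\to\pm\infty$. I would then set $K$ to be the closure of $\{y(\xi):\xi\text{ a selection}\}$. Compactness of $K$ is automatic, and a diagonal argument over the compact sets $A_n$ shows every limit point of shadowing points still $\eps$-shadows some selection, so that $f^n(K)\subseteq B(A_n,\eps)$; conversely, extending each $a\in A_n$ to a selection through $a$ at time $n$ gives $A_n\subseteq B(f^n(K),\eps)$. Together these yield $d_H(f^n(K),A_n)\le\eps$ for all $n$, i.e.\ $K$ $\eps$-shadows $(A_n)$ in $2^X$.

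The hard part will be the limit-shadowing of $K$, namely $d_H(f^n(K),A_n)\to0$ as $n\to\pm\infty$. Writing $r_n:=\sup_\xi d(f^n(y(\xi)),\xi_n)$, this is exactly the statement that $r_n\to0$, i.e.\ that the individual limit-shadowing rates provided by $f$ can be made \emph{uniform over all selections}. This is precisely the phenomenon announced in the abstract, and it cannot be obtained by soft compactness: the condition ``$d(f^n(y),\xi_n)\to0$'' is not closed under limits of selections, so a limit of shadowing points of varying selections is only guaranteed to stay $\eps$-close, not to remain asymptotic. For topologically hyperbolic $f$ the uniform rate is classical, coming from expansivity, but many systems with the L-shadowing property are not expansive and their local stable/unstable sets do not contract uniformly. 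To extract the uniform rate I would argue by contradiction: if $r_n\not\to0$, there are selections $\xi^{(k)}$ and times $|n_k|\to\infty$ with $d(f^{n_k}(y(\xi^{(k)})),\xi^{(k)}_{n_k})\ge\eta_0$; recentering time at $n_k$ and using that the tail errors $\delta_{n_k+m}\to0$ for each fixed $m$, I would pass, by compactness of $X$ and a diagonal extraction, to a genuine orbit $(f^m(z_0))$ together with a limit point $w$ satisfying $d(f^m(w),f^m(z_0))\le\eps$ for all $m$ and $d(w,z_0)\ge\eta_0$. The crux is to rule out such a configuration, and this is where the L-shadowing property must be used in an essential, non-soft way: I expect to invoke the asymptotic local-product-structure and Spectral Decomposition of \cite{artigue2020beyond} to force the limiting asymptotic pair $(w,z_0)$ to contract, contradicting $d(w,z_0)\ge\eta_0$ and thereby establishing the uniform rate. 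Once $r_n\to0$ is in hand, $d_H(f^n(K),A_n)\le r_n\to0$ follows at once, completing the limit-shadowing of $K$ and hence the theorem.
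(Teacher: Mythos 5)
Your first direction and the $\eps$-shadowing half of the second direction are fine (the first coincides with the paper's argument). The gap sits exactly where you placed your bet: the limit-shadowing of $K$. The statement you set out to prove, namely $r_n=\sup_\xi d(f^n(y(\xi)),\xi_n)\to 0$ for a given, arbitrary choice function $\xi\mapsto y(\xi)$, is false, so no argument can establish it. Take the shift $\sigma$ on the Hilbert cube $[0,1]^{\Z}$ (an L-shadowing, non-expansive example treated in Section 3 of the paper) and the exact $2^\sigma$-orbit $A_n=[0,1]^{\Z}$; selections are then genuine orbits of points $a$, and a legitimate limit-shadowing point for the orbit of $a$ is the point $y(a)$ obtained from $a$ by altering the single coordinate $N_a$ by $\eta<\eps$: indeed $d(\sigma^k(y(a)),\sigma^k(a))=\eta/2^{|N_a-k|}\le\eta$, which tends to $0$ as $|k|\to\infty$. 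If the choice $a\mapsto N_a$ is surjective onto $\N$, then $r_n\ge\eta$ for every $n$. This is precisely the ``lack of uniformity of contraction'' phenomenon the paper isolates (its Section 3 proposition that $\sigma$ has no uniform asymptotic local-product-structure is essentially this computation). The correct statement --- and the paper's key lemma, Theorem \ref{uniformity} --- is existential: among the points of $V^s_\eps(x)\cap V^u_\eps(y)$ one can \emph{select} some whose rate $\underline{k}=(k_r)_{r>0}$ is uniform in $(x,y)$, and the shadowing compact set must be assembled only from those selected points; this uniformity is also what makes that set closed, since the bare asymptotic condition does not survive limits, as you yourself observe.

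Your fallback plan for proving $r_n\to0$ by contradiction cannot be repaired either. The limiting configuration you extract --- points $w\neq z_0$ with $d(w,z_0)\ge\eta_0$ and $d(f^m(w),f^m(z_0))\le\eps$ for all $m\in\Z$ --- contradicts expansiveness, not L-shadowing: in the Hilbert cube shift, two points differing by $\eta_0$ in every coordinate satisfy exactly this for all time. Since the asymptotic relation is destroyed in the limit, $(w,z_0)$ is not an asymptotic pair, so neither the asymptotic local-product-structure nor the Spectral Decomposition of \cite{artigue2020beyond} gives any hold on it; the argument is self-defeating. The paper's contradiction argument for Theorem \ref{uniformity} is structured to avoid this trap: the bad pairs $(x_n,y_n)$ accumulate on a pair $(x,y)$ with $\delta'\le d(x,y)\le\delta$, and the contradiction is with the \emph{nonemptiness} of $V^s_{\eps/3}(x)\cap V^u_{\eps/3}(y)$ guaranteed by the asymptotic local-product-structure: any $z$ in that intersection is used to glue $f^i(y_n)$, $f^i(z)$, $f^i(x_n)$ into a $\delta'$-limit-pseudo-orbit, whose L-shadowing point would be a uniform-rate point for $(x_n,y_n)$, assumed not to exist. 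Note finally that the paper never shadows general limit-pseudo-orbits of $2^f$ at all: it invokes the characterization of L-shadowing as shadowing plus asymptotic local-product-structure, imports shadowing of $2^f$ from \cite{good2016shadowing}, and is left only with one-jump pseudo-orbits, i.e., producing $C\in V^s_\eps(A)\cap V^u_\eps(B)$ --- a reduction your direct approach forgoes, and which is what makes the existential uniformity lemma sufficient.
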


Among the difficulties that appear when proving the L-shadowing property for $2^f$ are the need to deal with pseudo-orbits with infinite length, since we are considering limit pseudo-orbits and shadowing orbits in the limit, and the lack of uniformity in the rate of convergence to zero in these limit-shadowing orbits. We cannot expect to always limit-shadow limit-pseudo-orbits for $2^f$ with finite sets, as it is done in the case of $\delta$-pseudo-orbits and the shadowing property, since this would imply that the elements of the pseudo-orbit converge in the Hausdorff topology to a finite set, which of course does not happen in all cases. Thus, when the shadowing compact set is infinite, the lack of uniformity in contractions on local stable/unstable sets could complicate the calculation of the Hausdorff distance between its iterates and the limit pseudo-orbit.

To deal with these problems, we first use the following characterization of the L-shadowing property.

\begin{theorem}\label{caracterizacaolshadowing}\cite[Theorem E]{artigue2020beyond}
    A homeomorphism, defined on a compact metric space, has the L-shadowing property if, and only if, it has the shadowing property and satisfies the asymptotic local-product-structure: 
    \begin{center} for each $\eps>0$ there is $\delta>0$ such that $d(x,y)\leq\delta$ implies $V^s_\eps(x)\cap V^u_\eps(y)\neq \emptyset$.\end{center}
\end{theorem}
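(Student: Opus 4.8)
The statement is an equivalence, so I would prove the two implications separately, treating the forward one as a warm‑up and concentrating the effort on the converse.

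\smallskip
\emph{L-shadowing implies shadowing and the asymptotic local-product-structure.} For shadowing, the plan is to reduce to $\eps$-shadowing of finite $\delta$-pseudo-orbits (which is equivalent to shadowing on a compact space) and to feed such a finite segment into the L-shadowing hypothesis. Given a finite $\delta$-pseudo-orbit $x_0,\dots,x_n$, I would extend it to a bi-infinite sequence by prepending the genuine backward orbit of $x_0$ and appending the genuine forward orbit of $x_n$; the result is a $\delta$-pseudo-orbit (its only jumps lie in the original block) and trivially a limit pseudo-orbit (the errors vanish outside the block), so L-shadowing produces a point $\eps$-shadowing it, hence $\eps$-shadowing the block. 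For the product structure, given $x,y$ with $d(x,y)\le\delta$, I would run the bi-infinite sequence equal to the backward orbit of $y$ on negative indices and to the forward orbit of $x$ on nonnegative indices. This has a single jump, of size $d(x,y)\le\delta$, and is a limit pseudo-orbit, so (choosing $\delta$ small relative to $\eps$) L-shadowing yields a point $z$ with $d(f^i(z),f^i(x))\to 0$ as $i\to+\infty$ and $d(f^i(z),f^i(y))\to 0$ as $i\to-\infty$, all within $\eps$; that is exactly $z\in V^s_\eps(x)\cap V^u_\eps(y)$.

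\smallskip
\emph{Shadowing and the asymptotic local-product-structure imply L-shadowing.} Here the $\eps$-shadowing requirement is immediate from the shadowing property, so the whole difficulty is to upgrade an $\eps$-shadowing orbit to one that also \emph{converges} to the pseudo-orbit at both ends. My plan is to produce separately a forward-limit-shadower $a$ (with $d(f^i(a),x_i)\le\eps$ for $i\ge0$ and $\to0$ as $i\to+\infty$) and a backward-limit-shadower $b$, both kept close at time $0$ to a fixed global $\eps_0$-shadower $w$, and then to glue $a$ and $b$ once, through the product structure, into a single point $z\in V^s_\eps(a)\cap V^u_\eps(b)$ that limit-shadows in both directions. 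Since $a$ and $b$ both stay within $\eps_0$ of $w$ at time $0$, the distance $d(a,b)$ is small enough for the final gluing to be legitimate.

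\smallskip
To build $a$, I would exploit that the sequence is a limit pseudo-orbit: choosing seam times $N_1<N_2<\cdots$ so that the tail errors beyond $N_k$ are below the shadowing constant for a precision $\eps_k\downarrow0$, the tail $(x_i)_{i\ge N_k}$ is shadowed to within $\eps_k$ by a point $p_k$. Starting from $a_0:=w$, I would recursively splice in these ever-finer tail-shadowers: at the seam $N_k$ the current orbit and $p_k$ are close, and the product structure furnishes a point in $V^s_{\theta_k}(p_k)\cap V^u_{\theta_k}(f^{N_k}(a_{k-1}))$ which, after pulling back by $f^{-N_k}$, agrees with $a_{k-1}$ on $[0,N_k]$ up to $\theta_k$ and tracks $p_k$ forever after; the spliced points $a_k$ then converge to the desired $a$, and the symmetric time-reversed argument produces $b$. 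The essential point, and the \textbf{main obstacle}, is that a naive splicing is self-defeating: to invoke the product structure at a seam one needs the two points within the product-structure threshold $\delta_{\mathrm{prod}}(\theta_k)$, but the seam distance inherits the \emph{coarser} of the two precisions, so precision can never improve if one keeps gluing the coarse front onto finer tails. I would resolve this by gluing the fine tail-shadowers to \emph{each other} (never degrading below $\eps_k$) and by placing the seams far out, where the \emph{asymptotic} contraction inside the local stable sets has already shrunk the relevant distances; concretely I would fix a summable sequence $\theta_k$ of gluing precisions first and only then choose $\eps_{k-1}\le\delta_{\mathrm{prod}}(\theta_k)/2$, which keeps the product structure applicable at every seam and forces $(a_k)$ to be Cauchy. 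The delicate feature throughout is that, as the abstract stresses, the contractions on the local stable/unstable sets are not uniform, so the construction must rely on the \emph{qualitative} convergence built into $V^s_\eps$ and $V^u_\eps$ rather than on any quantitative rate.
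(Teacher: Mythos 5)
This statement is not proved in the paper at all: it is imported verbatim as \cite[Theorem E]{artigue2020beyond}, so there is no in-paper argument to compare yours against. Judged on its own merits, your proposal is correct in both directions. The forward direction (reduction to finite pseudo-orbits padded by genuine orbits, and the single-jump sequence for the product structure, with $\delta$ chosen small relative to $\eps$ so that the jump at time $0$ does not spoil membership in $V^u_\eps(y)$) is the standard argument. For the converse, you have correctly isolated the real obstruction --- that a seam inherits the coarser of the two precisions being glued, so naive splicing never improves --- and your resolution is sound: the gluing point lies in the \emph{asymptotic} sets $V^s_{\theta_k}/V^u_{\theta_k}$, not merely in $W^s_{\theta_k}/W^u_{\theta_k}$, so the contamination $\theta_k$ decays along the forward orbit and the next seam can be placed adaptively, far enough out that the seam distance is governed by $\eps_k$ alone; fixing the summable gluing precisions $(\theta_k)$ (hence the thresholds $\delta_{\mathrm{prod}}(\theta_k)$) \emph{before} choosing the shadowing precisions $(\eps_k)$ removes the circularity, and $\sum_k\theta_k$ small relative to $\delta_{\mathrm{prod}}(\eps/3)$ legitimizes the final gluing of $a$ and $b$. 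It is worth observing that this adaptive, orbit-dependent choice of seams is exactly the kind of non-uniform, qualitative convergence that the present paper's Theorem \ref{uniformity} is designed to circumvent, so your construction also illustrates why that uniformity statement is not automatic.
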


The sets $V^s_\eps(x)$ and $V^u_\eps(y)$ are called the asymptotic $\eps$-stable/$\eps$-unstable sets of $x$, which are defined as the intersection of the $\eps$-stable/$\eps$-unstable with the stable/unstable sets of $x$ (see Defintion \ref{loc}). Asymptotic local-product-structure means, in particular, that we can both $\eps$-shadow and limit shadow pseudo-orbits with only one jump, formed by a past and a future orbit.

The main technical tool we use to deal with these difficulties is the following uniformity in the asymptotic local-product-structure assuming only the L-shadowing property.

\begin{mainthm}\label{uniformity}
    If $f$ has the L-shadowing property, then for each $\eps>0$ there is $\delta>0$ such that for each $r>0$ there exists $k_r\in\mathbb{N}^*$ such that if $d(x,y)<\delta$, then there is $z\in V_\eps^s(x)\cap V^u_\eps(y)$ satisfying 
    $$f^{k_r}(z)\in V^s_r(f^{k_r}(x)) \,\,\,\,\,\, \text{and} \,\,\,\,\,\, f^{-k_r}(z)\in V^u_r(f^{-k_r}(y)).$$
\end{mainthm}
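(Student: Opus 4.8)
The plan is to deduce Theorem \ref{uniformity} from the asymptotic local-product-structure (Theorem \ref{caracterizacaolshadowing}) together with a \emph{single} application of the L-shadowing property at the small scale $r$, and then to uniformize the convergence time by a compactness argument over the compact set of admissible pairs. Fix $\eps>0$ and, using Theorem \ref{caracterizacaolshadowing}, let $\delta>0$ be the constant provided by the asymptotic local-product-structure at scale $\eps/2$; thus $\delta$ depends only on $\eps$ and $d(a,b)\le\delta$ guarantees $V^s_{\eps/2}(a)\cap V^u_{\eps/2}(b)\neq\emptyset$. Since $V^s_{r'}\subseteq V^s_r$ and $V^u_{r'}\subseteq V^u_r$ whenever $r'\le r$, it suffices to produce $k_r$ for each sufficiently small $r$, say $r\le\eps/8$; the same point and time then serve all larger radii. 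For such an $r$, let $\gamma=\gamma(r)>0$ be the L-shadowing constant for the shadowing scale $r$, so that every sequence which is simultaneously a $\gamma$-pseudo-orbit and a limit-pseudo-orbit is $r$-shadowed by a point whose orbit also limit-shadows it.

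The core is a local construction. Fix an admissible pair $(a,b)$ (that is, $d(a,b)\le\delta$) and a point $\zeta\in V^s_{\eps/2}(a)\cap V^u_{\eps/2}(b)$. Because $d(f^n(a),f^n(\zeta))\to0$ and $d(f^{-n}(b),f^{-n}(\zeta))\to0$, choose $M=M_{a,b}\in\mathbb{N}^*$ with $d(f^{M}(a),f^{M}(\zeta))\le\gamma/2$ and $d(f^{-M}(b),f^{-M}(\zeta))\le\gamma/2$. By continuity of the finitely many iterates $f^{-M},\dots,f^{M}$ there is a neighborhood $U_{a,b}$ of $(a,b)$ such that every $(x,y)\in U_{a,b}$ satisfies $d(f^{M}(a),f^{M}(x))\le\gamma/2$, $d(f^{-M}(b),f^{-M}(y))\le\gamma/2$, and $d(f^{n}(a),f^{n}(x))\le\eps/8$, $d(f^{-n}(b),f^{-n}(y))\le\eps/8$ for all $0\le n\le M$. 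For such $(x,y)$ consider the two-sided sequence $q$ equal to $f^{n}(y)$ for $n\le -M$, to $f^{n}(\zeta)$ for $-M\le n\le M$, and to $f^{n}(x)$ for $n\ge M$. Its only nonzero errors occur at $n=\pm M$, and each is bounded by $\gamma$ via the triangle inequality (for instance at $n=M$, $d(f^{M}(\zeta),f^{M}(x))\le d(f^{M}(\zeta),f^{M}(a))+d(f^{M}(a),f^{M}(x))\le\gamma$); hence $q$ is at once a $\gamma$-pseudo-orbit and a limit-pseudo-orbit.

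Applying the L-shadowing property at scale $r$ to $q$ yields a point $z$ with $d(f^{n}(z),q_n)\le r$ for all $n$ and $d(f^{n}(z),q_n)\to0$ as $|n|\to\infty$. Reading off the three regimes, $d(f^{n}(z),f^{n}(x))\le r$ for $n\ge M$ with convergence to zero, while for $0\le n\le M$ one gets $d(f^{n}(z),f^{n}(x))\le r+d(f^{n}(\zeta),f^{n}(a))+d(f^{n}(a),f^{n}(x))\le r+\eps/2+\eps/8\le\eps$; the unstable side is symmetric. Therefore $z\in V^s_\eps(x)\cap V^u_\eps(y)$ and $z$ reaches scale $r$ by the time $M_{a,b}$, which is the same for every pair in $U_{a,b}$.

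It remains to uniformize the time. The set $\{(x,y):d(x,y)\le\delta\}$ is compact, so finitely many neighborhoods $U_{a_1,b_1},\dots,U_{a_m,b_m}$ cover it; setting $k_r=\max_{1\le i\le m}M_{a_i,b_i}$ produces a single time valid for every admissible pair, since the bound $d(f^{n}(z),f^{n}(x))\le r$ holding for $n\ge M_{a_i,b_i}$ holds a fortiori for $n\ge k_r$. The main obstacle is exactly the non-uniformity of contraction on stable and unstable sets emphasized in the introduction: for a single pair the construction only delivers a pair-dependent entry time $M_{a,b}$, and the whole function of the compactness argument—through the finitely many auxiliary points $\zeta$—is to merge these into one uniform $k_r$. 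The remaining care is the bookkeeping of scales: taking the auxiliary points at scale $\eps/2$ and restricting to small $r$ keeps the transitional estimate on $[0,M]$ below $\eps$ while the tail estimate stays below $r$.
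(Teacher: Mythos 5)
Your proof is correct, and its engine is the same as the paper's: take the point $\zeta$ given by the asymptotic local-product-structure of Theorem \ref{caracterizacaolshadowing}, glue the backward orbit of one point, a finite middle block of the orbit of $\zeta$, and the forward orbit of the other point into a limit-pseudo-orbit whose only two jumps are small, and apply the L-shadowing property at the fine scale $r$; the shadowing point then lies in $V^s_\eps(x)\cap V^u_\eps(y)$ and stays $r$-close along all iterates beyond the cut time. Where you genuinely diverge is in how the cut time is made uniform. The paper argues by contradiction: it assumes no uniform time exists, extracts an accumulation point $(x,y)$ of the putative counterexample pairs $(x_n,y_n)$, and runs the same gluing construction to violate the assumed failure, so compactness enters through subsequences. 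You argue directly: you freeze the construction on a neighborhood $U_{a,b}$ of each reference pair using continuity of the finitely many iterates $f^{-M},\dots,f^{M}$, cover the compact set $\{(x,y):d(x,y)\leq\delta\}$ by finitely many such neighborhoods, and take $k_r$ to be the maximum of the finitely many entry times. Your route buys some transparency: it never has to negate the four-deep quantifier string (a step where the paper is in fact slightly imprecise, asserting a failure time $m_n$ uniform over all $z$, whereas the literal negation only provides one depending on $z$; the paper's argument survives because its constructed $z'$ is $r$-close at \emph{all} times beyond $k$), and it makes explicit that $z$ is allowed to depend on $r$, which is all Theorem \ref{uniformity} requires. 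The price is the neighborhood-and-scales bookkeeping, which you handle correctly. One cosmetic slip: your three regimes defining $q$ overlap at $n=\pm M$; as your own error estimate indicates, the $\zeta$-block should run over $-M\leq n\leq M-1$, with $q_n=f^n(x)$ for $n\geq M$ and $q_n=f^n(y)$ for $n\leq -M-1$, so that the two jump errors are exactly $d(f^{M}(\zeta),f^{M}(x))\leq\gamma$ and $d(f^{-M}(y),f^{-M}(\zeta))\leq\gamma$. That is a matter of indexing, not a gap.
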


Roughly speaking, this means that there exist points in $V_\eps^s(x)\cap V^u_\eps(y)$ with the same rate of convergence to zero over all pairs $(x,y)\in X\times X$ with $d(x,y)<\delta$. The existence of this uniformity is enough to prove the L-shadowing property for $2^f$ as explained in the proof of Theorem \ref{hyplshad} in Section 2, which also contains the proof of Theorem \ref{uniformity}. In Section 3, we show the lack of uniformity of contraction on local stable/unstable sets on many known examples of homeomorphisms with the L-shadowing property, contrasting whenever possible with the uniformity obtained in Theorem \ref{uniformity}.

\section{L-shadowing for the induced homeomorphism}

In this section, after a few basic definitions, we prove Theorems \ref{hyplshad} and \ref{uniformity}. In what follows, $f\colon X\rightarrow X$ will be a homeomorphism of a compact metric space $(X,d)$.

\begin{definition}[Shadowing]\label{shadowing}
We say that $f$ has the \emph{shadowing property} if given $\varepsilon>0$ there is $\delta>0$ such that for each sequence $(x_k)_{k\in\mathbb{Z}}\subset X$ satisfying
$$d(f(x_k),x_{k+1})<\delta \,\,\,\,\,\, \text{for every} \,\,\,\,\,\, k\in\mathbb{Z}$$ there is $y\in X$ such that
$$d(f^k(y),x_k)<\varepsilon \,\,\,\,\,\, \text{for every} \,\,\,\,\,\, k\in\mathbb{Z}.$$
In this case, we say that $(x_k)_{k\in\mathbb{Z}}$ is a $\delta-$pseudo orbit of $f$ and that $(x_k)_{k\in\mathbb{Z}}$ is
$\varepsilon-$shadowed by $y$.
\end{definition}

\begin{definition}[L-shadowing]\label{Lshad}
We say that $f$ has the \textit{L-shadowing property}, if for every $\eps>0$, there exists $\delta>0$ such that for every sequence $(x_k)_{k\in\mathbb{Z}}\subset X$ satisfying
    \[d(f(x_k),x_{k+1})\leq \delta\;\;\;\mbox{for every}\;\;\;k\in\mathbb{Z}\;\;\;\mbox{and}\]
    \[\lim_{|k|\rightarrow\infty}d(f(x_k),x_{k+1})=0,\] there is $z\in X$ satisfying
    \[d(f^k(z),x_{k})\leq \eps\;\;\;\mbox{for every}\;\;\;k\in\mathbb{Z}\;\;\;\mbox{and}\]
    \[\lim_{|k|\rightarrow\infty}d(f^k(z),x_{k})=0.\] In this case, we say that $(x_k)_{k\in\mathbb{Z}}$ is a \textit{$\delta$-limit-pseudo-orbit} of $f$ and $(x_k)_{k\in\mathbb{Z}}$ is \textit{$\eps$-limit-shadowed} by $z$.
    
\end{definition}

\begin{definition}[Local stable/unstable sets]\label{loc}
For each $x\in X$ and $c>0$, let 
$$W^s_{c}(x):=\{y\in X; \,\, d(f^k(y),f^k(x))\leq c \,\,\,\, \textrm{for every} \,\,\,\, k\geq 0\}$$
be the \emph{c-stable set} of $x$ and
$$W^u_{c}(x):=\{y\in X; \,\, d(f^k(y),f^k(x))\leq c \,\,\,\, \textrm{for every} \,\,\,\, k\leq 0\}$$
be the \emph{c-unstable set} of $x$. We consider the \emph{stable set} of $x\in X$ as the set 
$$W^s(x):=\{y\in X; \,\, d(f^k(y),f^k(x))\to0 \,\,\,\, \textrm{when} \,\,\,\, k\to\infty\}$$
and the \emph{unstable set} of $x$ as the set 
$$W^u(x):=\{y\in X; \,\, d(f^k(y),f^k(x))\to0 \,\,\,\, \textrm{when} \,\,\,\, k\to-\infty\}.$$ Denote by $V^s_{\eps}(x)$ the intersection $W^s(x)\cap W^s_{\eps}(x)$ of the stable and the local stable sets of $x$ and by $V^u_{\eps}(x)$ the intersection $W^u(x)\cap W^u_{\eps}(x)$ of the unstable and the local unstable sets of $x$.
\end{definition}

\begin{proof}[Proof of Theorem \ref{uniformity}]
Let $f$ be a homeomorphism satisfying the L-shadowing property and suppose that the thesis is not true. Then there is $\eps>0$ such that for each $\delta>0$ there exists $r>0$ such that for each $n\in\mathbb{N}$ there is $m_n\in\mathbb{N}$ with $m_n>n$ and there are $x_n,y_n\in X$ with $d(x_n,y_n)<\delta$ satisfying: if $z\in V_\eps^s(x_n)\cap V^u_\eps(y_n)$, then either
\begin{equation}\label{eq1}
    d(f^{m_n}(z),f^{m_n}(x_n))>r \;\;\mbox{or}\;\;d(f^{-m_n}(z),f^{-m_n}(y_n))>r.
\end{equation}
Notice that this implies, in particular, that 
    $$V_r^s(x_n)\cap V_r^u(y_n)=\emptyset \,\,\,\,\,\, \text{for every} \,\,\,\,\,\, n\in\N.$$ For the previous $\eps>0$, consider $\delta\in(0,\eps/3)$ given by the L-shadowing property for $\eps/3$ and let $r\in(0,\frac{\eps}{3})$, $(m_n)_{n\in\mathbb{N}}$ and $(x_n,y_n)_{n\in\mathbb{N}}\subset X\times X$ be as above. Theorem \ref{caracterizacaolshadowing} ensures the existence of $\delta'\in(0,r)$ such that 
    \[d(a,b)\leq \delta' \;\;\Rightarrow\;\; V_r^s(a)\cap V_r^u(b)\neq\emptyset.\] Since $V_r^s(x_n)\cap V_r^u(y_n)=\emptyset$ for every $n\in\N$, it follows that $$d(x_n,y_n)>\delta' \,\,\,\,\,\, \text{for every} \,\,\,\,\,\, n\in\N.$$
    Let $(x,y)\in X\times X$ be an accumulation point of the sequence $(x_n,y_n)_{n\in\mathbb{N}}$. Note that $\delta'\leq d(x,y)\leq\delta$ since $\delta'<d(x_n,y_n)<\delta$ for every $n\in\N$. We will prove that 
    $$V^s_{\frac{\eps}{3}}(x)\cap V^u_{\frac{\eps}{3}}(y)=\emptyset,$$ contradicting the choice of $\delta$ given by the L-shadowing property. If there exists $z\in V^s_{\frac{\eps}{3}}(x)\cap V^u_{\frac{\eps}{3}}(y)$, then there exists $k\in\N$ such that
    $$d(f^k(z),f^k(x))<\frac{\delta'}{2}\,\,\,\,\,\, \text{and} \,\,\,\,\,\, d(f^{-k}(z),f^{-k}(y))<\frac{\delta'}{2}.$$ Choose $n\in\N$ sufficiently large such that $m_n>k$ and that
    $$d(f^i(x_n),f^i(x))<\frac{\delta'}{2} \,\,\,\,\,\, \text{and} \,\,\,\,\,\, d(f^{-i}(y_n),f^{-i}(y))<\frac{\delta'}{2}$$ for every $i\in\{0,\dots,k\}$. Thus, $$d(f^k(z),f^k(x_n))\leq d(f^k(z),f^k(x))+d(f^k(x),f^k(x_n))<\delta'$$ and $$d(f^{-k}(z),f^{-k}(y_n))\leq d(f^{-k}(z),f^{-k}(y))+d(f^{-k}(y),f^{-k}(y_n))<\delta'.$$
    This ensures that the sequence $(w_i)_{i\in\Z}\subset X$ defined by 
    $$w_i=\begin{cases}
    f^{i}(x_n) & \textit { if } i\geq k\\
    f^i(z) & \textit { if } i\in\{-k,\dots,k-1\}\\
    f^i(y_n) & \textit { if } i<-k
\end{cases}$$
is a $\delta'$-limit-pseudo-orbit. The L-shadowing property ensures the existence of $z'\in X$ that $r$-shadows and limit-shadows $(w_i)_{i\in\Z}$. In particular, $z'\in V_{\eps}^s(x_n)\cap V_{\eps}^u(y_n)$ since for $i\in\{0,\dots,k-1\}$ we have
\begin{eqnarray*}
d(f^i(z'),f^i(x_n))&\leq& d(f^i(z'),f^i(z))+d(f^i(z),f^i(x))+d(f^i(x),f^i(x_n))\\
&<&r+\frac{\eps}{3}+\frac{\delta'}{2}<\eps,
\end{eqnarray*}
for $i\in\{-k,\dots,-1\}$ we have
\begin{eqnarray*}
d(f^i(z'),f^i(y_n))&\leq& d(f^i(z'),f^i(z))+d(f^i(z),f^i(y))+d(f^i(y),f^i(y_n))\\
&<&r+\frac{\eps}{3}+\frac{\delta'}{2}<\eps,
\end{eqnarray*}
$$f^k(z')\in V^s_{r}(f^k(x_n)) \,\,\,\,\,\, \text{and} \,\,\,\,\,\, f^{-k}(z')\in V^u_{r}(f^{-k}(y_n)).$$
But since $m_n>k$, this ensures that 
\begin{equation}\label{eq2}
d(f^{m_n}(z'),f^{m_n}(x_n))\leq r \;\;\mbox{and}\;\;d(f^{-m_n}(z'),f^{-m_n}(y_n))\leq r
\end{equation}
contradicting inequalities (\ref{eq1}).
\end{proof}

Using this uniformity result, we can prove Theorem \ref{hyplshad}, that is, L-shadowing for $f$ is necessary and sufficient for $2^f$ to have L-shadowing. First, we stablish the following notation. 

\begin{notation}
For each $\eps>0$, $\underline{k}=(k_r)_{r>0}\subset\N$ with $k_r\to+\infty$ when $r\to0^{+}$, and $x,y\in X$, we define the set $\Gamma_{\eps,{\underline{k}}}(x,y)$ as the following set:
$$\left\{z\in V_\eps^s(x)\cap V^u_\eps(y); \; f^{k_r}(z)\in V^s_r(f^{k_r}(x)) \,\,\,\text{and} \,\,\,
f^{-k_r}(z)\in V^u_r(f^{-k_r}(y)) \,\,\,\forall \,r>0\right\}.$$
\end{notation}

\begin{remark}\label{gammaseq}
Using this notation, we note that Theorem \ref{uniformity} proves that for each $\eps>0$ there is $\delta>0$ and $\underline{k}=(k_r)_{r>0}\subset\N$ with $k_r\to+\infty$ when $r\to0^{+}$ such that if $d(x,y)<\delta$, then there is $z\in \Gamma_{\eps,{\underline{k}}}(x,y)$.
\end{remark}

\begin{proof}[Proof of Theorem \ref{hyplshad}]
First, suppose that $2^f$ has the L-shadowing property. So, given $\eps>0$ there is $\delta>0$ such that every $\delta$-limit-pseudo-orbit of $2^f$ is ${\eps}$-limit-shadowed. If $(x_i)_{i\in\mathbb{Z}}$ is a $\delta$-limit-pseudo-orbit of $f$, then the sequence $(\{x_i\})_{i\in\mathbb{Z}}\subset 2^X$ is a $\delta$-limit-pseudo-orbit of $2^f$. By the L-shadowing property of $2^f$, there is $A\in2^X$ such that 
    \[d_H(f^i(A),\{x_i\})\leq \eps\;\mbox{\,\,\, for every \,\,\,}i\in\mathbb{Z}\;\;\;\mbox{\,\,\,and}\]
    \[d_H(f^i(A),\{x_i\})\rightarrow 0\;\;\;\mbox{\,\,\,when\,\,\,}\;\;\;|i|\rightarrow \infty.\]
    In particular, for each $y\in A$, we have
    \[d(f^i(y),x_i)\leq \eps\;\mbox{ \,\,\,for every \,\,\,}i\in\mathbb{Z}\;\;\;\mbox{\,\,\,and}\]
     \[d(f^i(y),x_i)\rightarrow 0\;\;\;\mbox{\,\,when\,\,}\;\;\;|i|\rightarrow \infty.\] These imply that any point of $A$ $\eps
     $-limit-shadows the $\delta$-limit-pseudo-orbit $(x_i)_{i\in\mathbb{Z}}$, which proves that $f$ has the L-shadowing property.
     
Now, suppose that $f$ has the L-shadowing property. We will prove that $2^f$ has the L-shadowing property. Since $f$ has the shadowing property, $2^f$ also has it (see \cite{good2016shadowing}*{Theorem 3.2}). Thus, by Theorem \ref{caracterizacaolshadowing} it is enough to prove that given $\varepsilon>0$ there is $\delta>0$ such that if $d_H(A,B)<\delta$, then $V^s_{\eps}(A)\cap V^u_{\eps}(B)\neq\emptyset$. For each $\eps>0$ consider $\delta\in(0,\eps)$, given by Theorem \ref{uniformity}, such that for each $r>0$ there exists $k_r\in\mathbb{N}^*$ such that if $d(x,y)<\delta$, then there is $z\in V_\eps^s(x)\cap V^u_\eps(y)$ satisfying 
    $$f^{k_r}(z)\in V^s_r(f^{k_r}(x)) \,\,\,\,\,\, \text{and} \,\,\,\,\,\, f^{-k_r}(z)\in V^u_r(f^{-k_r}(y)).$$ 
For each $A,B\in 2^X$ such that $d_H(A,B)<\delta$, define $$\Psi=\{(x,y)\in A\times B\;|\;d(x,y)\leq\delta\}.$$ 
For $\underline{k}=(k_r)_{r>0}\subset\N$, let \[C=\{z\in\Gamma_{\eps,{\underline{k}}}(x,y)\;|\; (x,y)\in \Psi\}\] and note that $C$ is non-empty. We shall prove that $$C\in 2^X \,\,\,\,\,\, \text{and} \,\,\,\,\,\, C\in {V}^s_\eps(A)\cap {V}^u_\eps(B).$$ 
First, we prove that $C$ is a compact subset of $X$. Consider $(z_i)_{i\in\mathbb{N}}$ a sequence in $C$ with $z_i\rightarrow z$ when $i\to\infty$. We want to prove that $z$ is in $C$, that is, 
$$z\in\Gamma_{\eps,{\underline{k}}}(x,y) \,\,\,\,\,\, \text{for some} \,\,\,\,\,\, (x,y)\in \Psi.$$ 
For each $i\in\mathbb{N}$, $z_i\in\Gamma_{\eps,{\underline{k}}}(x_i,y_i)$ for some $x_i\in A$ and $y_i\in B$ with $d(x_i,y_i)\leq \delta$, since $z_i\in C$. By compactness of $A$ and $B$, we can suppose that $x_i\rightarrow x\in A$ and $y_i\rightarrow y\in B$ when $i\to\infty$, taking a subsequence if necessary. For each $i,n\in\mathbb{N}$, we have
\[d(f^n(z),f^n(x))\leq d(f^n(z),f^n(z_i))+d(f^n(z_i),f^n(x_i))+d(f^n(x_i),f^n(x)).\] Continuity of $f$ ensures that
\[\lim_{i\rightarrow \infty}d(f^n(z),f^n(z_i))=0,\;\;\;\mbox{since }z_i\rightarrow z, \,\,\, \text{and}\]
\[\lim_{i\rightarrow \infty}d(f^n(x_i),f^n(x))=0,\;\;\;\mbox{since }x_i\rightarrow x,\] while
\[d(f^n(z_i),f^n(x_i))\leq \eps,\;\;\;\mbox{ since }z_i\in W^s_\eps(x_i).\] Then, 
\[d(f^n(z),f^n(x))\leq\eps \,\,\,\,\,\, \text{for every} \,\,\,\,\,\, n\in\N,\] which implies that $z\in W^s_\eps(x)$. The proof of $z\in W^u_\eps(y)$ is analogous. To conclude that $C$ is compact, it remains to prove that $z\in\Gamma_{\eps,{\underline{k}}}(x,y)$. For each $r>0$ and $n\geq k_r$, we have
\begin{eqnarray*}
d(f^n(z),f^n(x))&=& d(f^n(\lim_{i\rightarrow\infty}z_i),f^n(\lim_{i\rightarrow\infty} x_i))\\
&=&\lim_{i\rightarrow\infty}d(f^n(z_i),f^n(x_i))<r
\end{eqnarray*}
and
\begin{eqnarray*}
d(f^{-n}(z),f^{-n}(y))&=& d(f^{-n}(\lim_{i\rightarrow\infty} z_i),f^{-n}(\lim_{i\rightarrow\infty} y_i))\\
&=&\lim_{i\rightarrow\infty}d(f^{-n}(z_i),f^{-n}(y_i))<r.
\end{eqnarray*}
This ensures that $z\in\Gamma_{\eps,{\underline{k}}}(x,y)$ and we conclude that $z\in C$ and $C\in 2^X$.

To prove that $C\in V_{\eps}^s(A)\cap V_{\eps}^u(B)$, we first prove that $C\in W_{\eps}^s(A)\cap W_{\eps}^u(B)$. Indeed, for each $z\in C$ there is $(x,y)\in \Psi$ such that $$z\in V_\eps^s(x)\cap V_\eps^u(y),$$ which ensures in particular that 
$$d(f^n(z),f^n(x))\leq\eps \,\,\,\,\,\, \text{and} \,\,\,\,\,\, d(f^{-n}(z),f^{-n}(y))\leq\eps$$
for every $n\in\N$, and consequently
$$f^n(C)\subset B(f^n(A),\eps) \,\,\,\,\,\, \text{and} \,\,\,\,\,\, f^{-n}(C)\subset B(f^{-n}(B),\eps)$$
for every $n\in\N$. Also, for each $a\in A$, there is $b\in B$ such that $d(a,b)\leq \delta$, since $d_H(A,B)<\delta$, and then there is $z\in \Gamma_{\eps,{\underline{k}}}(a,b)$ as noted in Remark \ref{gammaseq}. 
Thus, $z\in C$ and
$$d(f^n(z),f^n(a))\leq\eps \,\,\,\,\,\, \text{for every} \,\,\,\,\,\, n\in\N,$$
which ensures that $f^n(A)\subset B(f^n(C),\eps)$ for every $n\in\N$. A similar argument proves that $f^{-n}(B)\subset B(f^{-n}(C),\eps)$ for every $n\in\N$. These inclusions ensure that $$d_H(f^n(C),f^n(A))\leq\eps \,\,\,\,\,\, \text{and} \,\,\,\,\,\, d_H(f^{-n}(C),f^{-n}(B))\leq\eps$$
for every $n\in\N$ and we conclude that $C\in W_{\eps}^s(A)\cap W_{\eps}^u(B)$. 
Now, we prove that $$C\in W^s(A)\cap W^u(B).$$ Since each $z\in C$ satisfies $z\in \Gamma_{\eps,{\underline{k}}}(x,y)$ for some
$(x,y)\in \Psi$, it follows that for each $r>0$ we have
$$d(f^n(z),f^n(x))\leq r \,\,\,\,\,\, \text{for every} \,\,\,\,\,\, n\geq k_r.$$
This ensures that
$$f^n(C)\subset B(f^n(A),r)\,\,\,\,\,\, \text{for every} \,\,\,\,\,\, n\geq k_r.$$ Also, $f^n(A)\subset B(f^n(C),r)$ for every $n\geq k_r$ since for each $a\in A$ there is $b\in B$ such that $d(a,b)<\delta$ and $z\in \Gamma_{\eps,{\underline{k}}}(a,b)$, which ensures in particular that $z\in C$ and
$$d(f^n(a),f^n(z))<r \,\,\,\,\,\, \text{for every} \,\,\,\,\,\, n\geq k_r.$$
Thus, we conclude that
$$d_H(f^n(C),f^n(A))\leq r\,\,\,\,\,\, \text{for every} \,\,\,\,\,\, n\geq k_r.$$
Similarly, we obtain
$$d_H(f^{-n}(C),f^{-n}(B))\leq r\,\,\,\,\,\, \text{for every} \,\,\,\,\,\, n\geq k_r.$$
This ensures that $C\in W^s(A)\cap W^u(B)$ and finishes the proof.
\end{proof}

\section{Examples}

In this section, we show the lack of uniformity of contraction on local stable/unstable sets and contrast it with the uniformity obtained in Theorem \ref{uniformity} on many known examples of homeomorphisms with the L-shadowing property, which are the topologically hyperbolic homeomorphisms, the structurally stable diffeomorphisms, the continuum-wise hyperbolic homeomorphisms, the shift map in the Hilbert cube, and an example without periodic points discussed in \cite{CK} and \cite{artigue2020beyond}, explaining the differences that appear in each case.


\vspace{+0.4cm}

\hspace{-0.45cm}\textbf{Topologically hyperbolic homeomorphisms}. Topological hyperbolicity is defined as the presence of the shadowing property together with expansiveness, defined as follows.

\begin{definition}[Expansiveness]
For each $x\in X$ and $c>0$ let $$\Gamma_{c}(x)=W^u_{c}(x)\cap W^s_{c}(x)$$ be the dynamical ball of $x$ with radius $c$. We say that $f$ is \emph{expansive} if there exists $c>0$ such that $$\Gamma_c(x)=\{x\} \,\,\,\,\,\, \text{for every} \,\,\,\,\,\, x\in X.$$ A constant $c$ satisfying the above definition is called an expansivity constant.
\end{definition}

Expansiveness was introduced by Utz in \cite{Utz}, explored by many authors since then, and gives information on topological and statistical aspects of chaotic systems. Many features of expansive homeomorphisms are known to these days and we invite the authors to read the monograph \cite{AOKIHIRAIDE} on this subject. One of these features is the uniform contraction on local stable/unstable sets defined below.

\begin{definition}[Uniform contractions]
We say that a homeomorphism $f\colon X\to X$ has \emph{uniform contractions in its local stable/unstable sets} if there exists $\eps>0$ such that for each $r>0$ there exists $k_r\in\N$ satisfying
\begin{equation}\label{Mane}f^n(W^s_\eps(x))\subset W^s_r(f^n(x)) \,\,\,\,\,\, \text{and} \,\,\,\,\,\, f^{-n}(W^u_\eps(x))\subset W^u_r(f^{-n}(x))
\end{equation}
for every $x\in X$ and $n\geq k_r$.
\end{definition}

\begin{proposition}
If $f\colon X\to X$ is an expansive homeomorphism, then $f$ has uniform contractions on its local stable/unstable sets.
\end{proposition}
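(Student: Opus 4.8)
The plan is to prove the contrapositive-style statement directly by extracting a uniform iterate bound from expansiveness, using a standard compactness/contradiction argument. Let $c>0$ be an expansivity constant and fix $\eps\in(0,c/2)$ (so that $\eps$-stable and $\eps$-unstable sets behave well under the expansivity constant). The goal is to show that for each $r>0$ there is a single $k_r\in\N$ that works for all $x\in X$ simultaneously, which is exactly the uniformity asserted by (\ref{Mane}).

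\textbf{Main argument.} First I would record the elementary monotonicity fact that $f^n(W^s_\eps(x))\subset W^s_\eps(f^n(x))$ for all $n\geq 0$: if $y\in W^s_\eps(x)$, then $d(f^k(y),f^k(x))\leq\eps$ for all $k\geq0$, so applying this for indices $\geq n$ shows $f^n(y)\in W^s_\eps(f^n(x))$. Thus the iterates of a local stable set stay inside local stable sets of radius $\eps$; the content of the claim is that they eventually shrink to radius $r$. The key step is to prove the pointwise assertion: for each $r>0$, for each $x\in X$, there exists $N(x,r)$ such that $f^n(W^s_\eps(x))\subset W^s_r(f^n(x))$ for $n\geq N(x,r)$. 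I would establish this by contradiction using expansiveness: if it failed, one could find, for a fixed $x$, a sequence $n_j\to\infty$ and points $y_j\in W^s_\eps(x)$ with $d(f^{n_j+m_j}(y_j),f^{n_j+m_j}(x))>r$ for some $m_j\geq0$; a compactness/diagonal argument on the orbit segments would produce a point in a dynamical ball $\Gamma_c$ of some limit point that is not the center, contradicting expansiveness.

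\textbf{Uniformizing over $x$.} Having the pointwise bound $N(x,r)$, I would upgrade it to a uniform $k_r$ by another contradiction-plus-compactness argument, which I expect to be the main obstacle. Suppose no uniform $k_r$ exists: then there are sequences $x_n\in X$, exponents $\ell_n\to\infty$, points $y_n\in W^s_\eps(x_n)$, and shifts $m_n\geq0$ with $d(f^{\ell_n+m_n}(y_n),f^{\ell_n+m_n}(x_n))>r$. Passing to convergent subsequences of $(f^{\ell_n}(x_n))$ and $(f^{\ell_n}(y_n))$ and using a diagonal extraction over all time-coordinates, I would build a pair of points $\bar x,\bar y$ that are $\eps$-close along the full orbit (hence $\bar y\in\Gamma_\eps(\bar x)\subset\Gamma_c(\bar x)$) yet satisfy $d(f^{m}(\bar y),f^{m}(\bar x))\geq r>0$ for some $m$, forcing $\bar y\neq\bar x$ and again contradicting expansiveness. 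The unstable case is entirely symmetric, obtained by applying the stable argument to $f^{-1}$, whose unstable sets are the stable sets of $f$; one then takes $k_r$ to be the maximum of the two bounds. The delicate point throughout is organizing the diagonal extraction so that the limiting orbit is controlled at every integer time simultaneously, since the violating shifts $m_n$ may themselves be unbounded; keeping track of two competing sequences of exponents (the $\ell_n$ giving the iterate and the $m_n$ giving where the bound fails) is where care is required.
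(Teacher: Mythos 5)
Your overall strategy---re-deriving the uniform contraction from scratch by compactness and expansiveness, rather than, as the paper does, simply invoking Lemma I of Ma\~n\'e \cite{mane1979expansive} for the expansivity constant $2\eps$---is viable in principle, but the compactness argument as you set it up has a genuine gap, located exactly at the point you yourself flag as delicate. You extract limits $\bar x,\bar y$ of the sequences $(f^{\ell_n}(x_n))$ and $(f^{\ell_n}(y_n))$, i.e.\ you recenter at the iterate $\ell_n$, whereas the violation of the bound occurs at time $\ell_n+m_n$ with $m_n\geq 0$ uncontrolled. If $m_n\to\infty$ (the only hard case), no diagonal extraction over fixed time-coordinates can capture the violation: for each fixed $j$ you only learn that $d(f^j(\bar x),f^j(\bar y))\leq\eps$, so $\bar y\in\Gamma_\eps(\bar x)$, and expansiveness then forces $\bar y=\bar x$. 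In particular your claim that the limit pair also satisfies $d(f^{m}(\bar y),f^{m}(\bar x))\geq r$ for some $m$ is not justified---it is in fact false in this case---so the argument yields no contradiction precisely when one is needed.

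The repair is simple and eliminates any need for a diagonal argument: recenter at the time of violation. Put $s_n=\ell_n+m_n$, so $s_n\geq\ell_n\to\infty$, and pass to subsequences with $f^{s_n}(x_n)\to\bar x$ and $f^{s_n}(y_n)\to\bar y$. Then $d(\bar x,\bar y)\geq r>0$ since $d(f^{s_n}(x_n),f^{s_n}(y_n))>r$; and for each fixed $j\in\Z$ we have $s_n+j\geq 0$ for all large $n$, hence $d(f^{s_n+j}(x_n),f^{s_n+j}(y_n))\leq\eps$ because $y_n\in W^s_\eps(x_n)$, and continuity of $f^j$ gives $d(f^j(\bar x),f^j(\bar y))\leq\eps$ for every $j\in\Z$ (negative $j$ included, which is why the backward control comes for free). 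Thus $\bar y\in\Gamma_\eps(\bar x)\setminus\{\bar x\}$ with $\eps<c$, contradicting expansiveness; the unstable case follows by applying this to $f^{-1}$, as you say. With this fix your preliminary pointwise step becomes superfluous (the uniform statement is proved directly), and what you obtain is essentially a self-contained proof of Ma\~n\'e's Lemma I, whose citation constitutes the paper's entire proof.
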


\begin{proof}
If $\eps>0$ is such that $2\eps$ is an expansivity constant of $f$, then given $r>0$, we can choose $k_r=N\in\N$ from Lemma I of \cite{mane1979expansive} satisfying the above definition.
\end{proof}
Note that, in particular, we have the following equalities for every $x\in X$:
$$W^s_\eps(x)=V^s_{\eps}(x) \,\,\,\,\,\, \text{and} \,\,\,\,\,\, W^u_\eps(x)=V^u_{\eps}(x).$$

\begin{definition}[Local-product-structure]
We say that $f$ has the local-product-structure if for each $\eps>0$ there exists $\delta>0$ such that $d(x,y)<\delta$ implies $W^s_{\eps}(x)\cap W^u_{\eps}(y)\neq\emptyset$.
\end{definition}

If $f$ has the shadowing property, then it satisfies, in particular, the local-product-structure since the later means that we can $\eps$-shadow the $\delta$-pseudo-orbits with one jump. In the expansive case, these properties are indeed equivalent. 

\begin{definition}[Uniform local-product-structure]\label{unif}
We say that $f$ has a \emph{uniform local-product-structure} if for each $\eps>0$ there exist $\delta>0$ and $\underline{k}=(k_r)_{r>0}\subset\N$ such that if $d(x,y)<\delta$, then $W^s_{\eps}(x)\cap W^u_{\eps}(y)\neq\emptyset$ and $W^s_{\eps}(x)\cap W^u_{\eps}(y)\subset \Gamma_{\eps,\underline{k}}(x,y)$.
\end{definition}

This means, in particular, that the conclusion of Theorem \ref{uniformity} holds for every point in $W^s_{\eps}(x)\cap W^u_{\eps}(y)$. We conclude proving the uniform local-product-structure for topologically hyperbolic homeomorphisms.

\begin{proposition}
If $f\colon X\to X$ is an expansive homeomorphism with the local-product-structure, then $f$ has a uniform local-product-structure.
\end{proposition}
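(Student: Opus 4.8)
The plan is to read off all the data I need from the immediately preceding proposition on uniform contractions and then feed it into the local-product-structure hypothesis, the point being that in the expansive setting the local stable/unstable sets already coincide with the asymptotic ones. By expansiveness there is $\eps^*>0$ with $2\eps^*$ an expansivity constant, and that proposition supplies a single sequence $\underline{k}=(k_r)_{r>0}\subset\N$ with $k_r\to+\infty$ as $r\to 0^+$ such that $f^n(W^s_{\eps^*}(x))\subset W^s_r(f^n(x))$ and $f^{-n}(W^u_{\eps^*}(x))\subset W^u_r(f^{-n}(x))$ for every $x\in X$ and $n\ge k_r$, together with the identities $W^s_{\eps^*}(x)=V^s_{\eps^*}(x)$ and $W^u_{\eps^*}(x)=V^u_{\eps^*}(x)$. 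I would fix the target radius to be $\eps$, shrinking it if necessary so that $\eps\le\eps^*$ (harmless, since the local product structure only carries content for small radii); uniform contraction at scale $\eps^*$ then holds verbatim at the smaller scale $\eps$ with the same $\underline{k}$, because $W^s_\eps(x)\subset W^s_{\eps^*}(x)$, and moreover $W^s_\eps(x)\subset W^s_{\eps^*}(x)\subset W^s(x)$ forces $W^s_\eps(x)=V^s_\eps(x)$, and likewise $W^u_\eps(y)=V^u_\eps(y)$.

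With this data in hand I would obtain $\delta>0$ directly from the local-product-structure hypothesis at scale $\eps$, so that $d(x,y)<\delta$ implies $W^s_\eps(x)\cap W^u_\eps(y)\neq\emptyset$; this settles the non-emptiness clause. For the inclusion $W^s_\eps(x)\cap W^u_\eps(y)\subset\Gamma_{\eps,\underline{k}}(x,y)$, I would take an arbitrary $z\in W^s_\eps(x)\cap W^u_\eps(y)$. The scale identities give immediately $z\in V^s_\eps(x)\cap V^u_\eps(y)$. Then for each $r>0$ uniform contraction yields $f^{k_r}(z)\in W^s_r(f^{k_r}(x))$; since $z\in W^s(x)$ implies $f^{k_r}(z)\in W^s(f^{k_r}(x))$, this upgrades to $f^{k_r}(z)\in V^s_r(f^{k_r}(x))$, and symmetrically $f^{-k_r}(z)\in V^u_r(f^{-k_r}(y))$. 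As $r>0$ is arbitrary, $z\in\Gamma_{\eps,\underline{k}}(x,y)$, which is exactly the inclusion demanded by Definition \ref{unif}.

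The genuinely delicate point, and the one I expect to be the main obstacle, is the matching of scales: the whole argument rests on the radius $\eps$ lying within the expansivity scale, so that membership in $W^s_\eps(x)\cap W^u_\eps(y)$ is not merely $\eps$-closeness but actual asymptotic convergence with a uniform rate. It is worth stressing that this is where expansiveness is indispensable: under the local-product-structure alone a point of $W^s_\eps(x)\cap W^u_\eps(y)$ need not converge at all, so it is precisely the preceding uniform-contraction proposition (itself a consequence of expansiveness via Lemma I of \cite{mane1979expansive}) that does the substantive work, while everything else is bookkeeping around the identifications $W^s_\eps=V^s_\eps$ and $W^u_\eps=V^u_\eps$.
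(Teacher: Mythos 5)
Your proof is correct and follows essentially the same route as the paper's: both take $\delta$ from the local-product-structure and $\underline{k}$ from the preceding uniform-contraction proposition (Ma\~n\'e's Lemma I), then use the resulting identities $W^s_\eps(x)=V^s_\eps(x)$ and $W^u_\eps(y)=V^u_\eps(y)$ to obtain $W^s_\eps(x)\cap W^u_\eps(y)\subset \Gamma_{\eps,\underline{k}}(x,y)$. Your explicit restriction to $\eps\le\eps^*$ (half an expansivity constant) is precisely the restriction the paper leaves implicit, so the two arguments coincide in substance, with yours merely spelling out the verification of the inclusion.
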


\begin{proof}
For each $\eps>0$, choose $\delta>0$ given by the local-product-structure and $\underline{k}=(k_r)_{r>0}\subset\N$ as above (Lemma I in \cite{mane1979expansive}). It follows that
$$W^s_{\eps}(x)\cap W^u_{\eps}(y)=V^s_{\eps}(x)\cap V^u_{\eps}(y)\subset \Gamma_{\eps,\underline{k}}(x,y)$$
for every $x,y\in X$ such that $d(x,y)<\delta$.
\end{proof}

\vspace{+0.4cm}

\hspace{-0.4cm}\textbf{Structurally stable diffeomorphisms}. Structural stability is a fundamental property in dynamical systems, which means that the qualitative behavior of orbits is unaffected by small perturbations of the system. Structurally stable diffeomorphisms admit the shadowing property (see \cite{CRob}) and have an expansive (hyperbolic) non-wandering set (see \cite{ManeStab}). The latter is a consequence of the celebrated solution of the Stability Conjecture. This is enough to ensure the L-shadowing property, as proved in \cite{artigue2020beyond}*{Theorem 4.1}. This class includes a wider range of systems that are not considered in the topologically hyperbolic class, as, for example, the Morse-Smale diffeomorphisms.

Let $\Omega(f)$ denote the set of non-wandering points of $f$, which are the points $x\in X$ such that for each $\eta>0$ there exist $y\in X$ with $d(x,y)<\eta$ and $k\in\N$ such that $d(f^k(y),x)<\eta$. It is proved in \cite{artigue2020beyond}*{Proposition 4.2} that if $\eps>0$ is an expansiveness constant of $f$ restricted to its non-wandering set, then $$W^s_{\eps}(x)\subset W^s(x) \,\,\,\,\,\, \text{and} \,\,\,\,\,\, W^u_{\eps}(x)\subset W^u(x) \,\,\,\,\,\, \text{for every} \,\,\,\,\,\, x\in X.$$ This is enough to ensure that
$$W^s_\eps(x)=V^s_{\eps}(x) \,\,\,\,\,\, \text{and} \,\,\,\,\,\, W^u_\eps(x)=V^u_{\eps}(x)$$
for every $x\in X$. Also, expansiveness in the non-wandering set means that there is an uniform contraction on local stable/unstable sets inside the non-wandering set. In the following result, we prove that there is a uniform contraction in the local stable/unstable sets of points in $\Omega(f)$ (the local stable/unstable sets are not necessarily contained in $\Omega(f)$).

\begin{proposition}
If a homeomorphism $f\colon X\to X$ has the shadowing property and an expansive non-wandering set, then there exists $\eps>0$ such that for each $r>0$ there exists $k_r\in\N$ satisfying
$$f^n(W^s_\eps(x))\subset W^s_r(f^n(x)) \,\,\,\,\,\, \text{and} \,\,\,\,\,\, f^{-n}(W^u_\eps(x))\subset W^u_r(f^{-n}(x))
$$
for every $x\in \Omega(f)$ and $n\geq k_r$.
\end{proposition}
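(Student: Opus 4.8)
The plan is to fix $\eps>0$ to be an expansiveness constant of the restriction $f|_{\Omega(f)}$ and to prove the stable inclusion only; the unstable inclusion then follows by applying the stable case to $f^{-1}$, which has the same non-wandering set, the same expansiveness constant, the shadowing property, and for which $W^u_\eps(x)$ relative to $f$ equals $W^s_\eps(x)$ relative to $f^{-1}$, so that taking the larger of the two resulting constants $k_r$ finishes. With this choice of $\eps$, the result of \cite{artigue2020beyond}*{Proposition 4.2} quoted above gives $W^s_\eps(x)\subset W^s(x)$ and $W^u_\eps(x)\subset W^u(x)$ for every $x\in X$, which I will use to convert uniform \emph{boundedness} on local stable sets into genuine \emph{asymptoticity}.

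I would argue by contradiction, exactly as in the proof of Theorem \ref{uniformity}. Suppose the stable statement fails for this $\eps$: then there is $r>0$ such that for every $k\in\N$ one can find $x_k\in\Omega(f)$, a point $y_k\in W^s_\eps(x_k)$, and $m_k\geq k$ with $d(f^{m_k}(y_k),f^{m_k}(x_k))>r$. Writing $a_k=f^{m_k}(x_k)\in\Omega(f)$ and $b_k=f^{m_k}(y_k)$, and passing to a subsequence (along which $m_k\to\infty$), I may assume $a_k\to a$ and $b_k\to b$. Since $\Omega(f)$ is closed, $a\in\Omega(f)$, while $d(a,b)\geq r$ forces $a\neq b$. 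Because $y_k\in W^s_\eps(x_k)$ we have $d(f^{m_k+j}(y_k),f^{m_k+j}(x_k))\leq\eps$ whenever $m_k+j\geq 0$; fixing $j\in\Z$ and letting $k\to\infty$ yields $d(f^j(a),f^j(b))\leq\eps$ for every $j\in\Z$. Thus $b\in W^s_\eps(a)\cap W^u_\eps(a)$, and by \cite{artigue2020beyond}*{Proposition 4.2} the point $b$ is bi-asymptotic to the orbit of $a$, that is, $b\in W^s(a)\cap W^u(a)$.

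The heart of the argument, and the step I expect to be the main obstacle, is to promote $b$ back into the non-wandering set so that expansiveness of $f|_{\Omega(f)}$ can be invoked; note that expansiveness on $\Omega(f)$ says nothing a priori about the limit point $b$, which need not lie in $\Omega(f)$. I would do this with a closing argument. Given $\delta>0$, choose $N$ so large that $f^N(b)$ is within $\delta$ of $f^N(a)$ and $f^{-N}(b)$ is within $\delta$ of $f^{-N}(a)$, which is possible by the bi-asymptoticity just obtained. Since $a\in\Omega(f)$ is chain recurrent and the points $f^N(a),f^{-N}(a)$ lie on a single orbit, hence in one chain-transitive component, there is a finite $\delta$-chain from $f^N(a)$ to $f^{-N}(a)$. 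Splicing the genuine orbit segment $f^{-N}(b),\dots,f^N(b)$ to this chain produces a periodic $\delta$-pseudo-orbit passing through $b$; shadowing it yields a genuine orbit returning arbitrarily close to $b$, whence $b\in\Omega(f)$. Now $a,b\in\Omega(f)$, $a\neq b$, and $d(f^j(a),f^j(b))\leq\eps$ for all $j$ with $\eps$ an expansiveness constant of $f|_{\Omega(f)}$, contradicting expansiveness. For each $r$ this contradiction shows that some $k_r$ must work, proving the stable inclusion, and the symmetric argument for $f^{-1}$ completes the proof.
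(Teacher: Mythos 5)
Your proposal is correct, but it reaches the contradiction by a genuinely different route than the paper. Both arguments share the same skeleton: assume the uniform contraction fails for some $r>0$, take $x_k\in\Omega(f)$, $y_k\in W^s_\eps(x_k)$ and times $m_k\to\infty$ witnessing the failure, pass to limit points $a$ of $(f^{m_k}(x_k))$ and $b$ of $(f^{m_k}(y_k))$, note that $d(f^j(a),f^j(b))\leq\eps$ for all $j\in\Z$ while $d(a,b)\geq r$, and then contradict expansiveness of $f|_{\Omega(f)}$ --- so the crux in both cases is to show that the limit point $b$ lies in $\Omega(f)$. The paper does this structurally: it invokes the Spectral Decomposition of $\Omega(f)$ into basic sets, shrinks $\eps$ so that the $\eps$-neighborhoods of the basic sets are disjoint, and uses that each basic set, being a chain-recurrence class, is the maximal invariant set in its $2\eps$-neighborhood; since the entire orbit of $b$ stays $\eps$-close to the basic set containing the $x_k$'s, this forces $b$ into that basic set. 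You instead upgrade $b\in\Gamma_\eps(a)$ to bi-asymptoticity via \cite{artigue2020beyond}*{Proposition 4.2} and then run an explicit closing argument (iterates of the chain-recurrent point $a$ are chain-equivalent at every scale, so you can splice the orbit segment of $b$ into a $\delta$-chain and shadow the resulting periodic pseudo-orbit) to conclude $b\in\Omega(f)$. Your route avoids the Spectral Decomposition entirely and works for any expansivity constant of $f|_{\Omega(f)}$, whereas the paper must take $\eps$ small enough to separate the basic sets; the price is the reliance on Proposition 4.2 and on a closing construction that the paper's argument does not need. One small indexing point in your splice: as written, the jump at the end of the chain from $f^N(a)$ to $f^{-N}(a)$ does not land within $\delta$ of the next point of the $b$-segment; either run the chain to $f^{-N-1}(a)$ instead, or choose $N$ so that $d(f^j(a),f^j(b))<\delta$ for all $|j|\geq N-1$ (which bi-asymptoticity provides). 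This is a trivial repair and does not affect the argument.
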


\begin{proof}Since $f_{|\Omega(f)}$ is topologically hyperbolic, the Spectral Decomposition Theorem ensures that $\Omega(f)$ splits into a finite union of disjoint basic sets $C_1,\dots,C_k$, which are compact, invariant, and transitive.
Let $\eps>0$ be an expansiveness constant of $f$ restricted to its non-wandering set and assume, in addition, that the $\eps$-neighborhood of all basic sets of $f$ are disjoint. Note that each $C_i$ is the maximal invariant set inside their $2\eps$-neighborhood since $C_i$ is a chain-recurrent class. Now by contradiction, assume that there exists $r>0$ such that for each $n\in\N$ there exists $x_n\in\Omega(f)$, $y_n\in W^s_{\eps}(x_n)$, and $k_n\geq n$ such that 
$$d(f^{k_n}(y_n),f^{k_n}(x_n))>r.$$ 
We can assume, by choosing a subsequence, that $(x_n)_{n\in\N}$ is contained in a basic set $C_i$. Thus, if $x,y\in X$ are accumulation points of the sequences $(f^{k_n}(x_n))_{n\in\N}$ and $(f^{k_n}(y_n))_{n\in\N}$, respectively, then $x,y\in C_i\subset\Omega(f)$, $d(x,y)\geq r$, and $y\in\Gamma_{\eps}(x)$, contradicting the fact that $\Omega(f)$ is expansive.
\end{proof}

However, we prove below that such uniform contraction only holds in the whole space when $f$ is asymptotically expansive.

\begin{definition}[Asymptotic expansiveness]
We say that a homeomorphism $f\colon X\to X$ is \emph{asymptotically expansive} if there exists $\eps>0$ such that $$V^s_{\eps}(x)\cap V^u_{\eps}(x)=\{x\} \,\,\,\,\,\, \text{for every} \,\,\,\,\,\, x\in X.$$ The set $V^s_{\eps}(x)\cap V^u_{\eps}(x)$ is called the asymptotic dynamical ball of $x$ of radius $\eps$.
\end{definition}

This notion was first defined in \cite{MR4439460}*{Definition 1}. For a homeomorphism with the L-shadowing property, expansiveness and asymptotic expansiveness are equivalent, as noted in \cite{MR4439460}*{Remark 2.6}. This is actually a consequence of \cite{artigue2020beyond}*{Lemma 3.3}. 

\begin{proposition}
If a homeomorphism $f\colon X\to X$ has uniform contractions in its local stable/unstable sets, then $f$ is asymptotically expansive.
\end{proposition}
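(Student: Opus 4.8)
The plan is to show that the same constant $\eps>0$ witnessing the uniform contractions also witnesses asymptotic expansiveness. So I fix $x\in X$, take an arbitrary $z\in V^s_\eps(x)\cap V^u_\eps(x)$, and aim to prove $z=x$. The first step is purely bookkeeping: since $V^s_\eps(x)\subseteq W^s_\eps(x)$ and $V^u_\eps(x)\subseteq W^u_\eps(x)$, the point $z$ lies in the dynamical ball $\Gamma_\eps(x)$, meaning $d(f^j(z),f^j(x))\leq\eps$ for \emph{every} $j\in\Z$. This two-sided $\eps$-closeness of the whole orbit is exactly the feature I intend to exploit.

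The key observation I would use is the shift-invariance of the dynamical ball: from $z\in\Gamma_\eps(x)$ one gets $f^{-k}(z)\in W^s_\eps(f^{-k}(x))$ for every $k\in\N$, because $d(f^m(f^{-k}(z)),f^m(f^{-k}(x)))=d(f^{m-k}(z),f^{m-k}(x))\leq\eps$ holds for all $m\geq 0$ (the indices $m-k$ range over a subset of $\Z$, where the bound is already available). Now I fix $r>0$, take $k_r\in\N$ from the definition of uniform contractions, and apply the stable inclusion in (\ref{Mane}) to the base point $f^{-k_r}(x)$ together with its stable companion $f^{-k_r}(z)$, evaluated at $n=k_r$:
$$z=f^{k_r}\bigl(f^{-k_r}(z)\bigr)\in f^{k_r}\bigl(W^s_\eps(f^{-k_r}(x))\bigr)\subseteq W^s_r\bigl(f^{k_r}(f^{-k_r}(x))\bigr)=W^s_r(x).$$
In particular $d(z,x)\leq r$, and since $r>0$ was arbitrary this forces $d(z,x)=0$, i.e. $z=x$. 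Hence $V^s_\eps(x)\cap V^u_\eps(x)=\{x\}$ for every $x$, which is asymptotic expansiveness.

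The step I expect to be the genuine obstacle — and where a naive argument breaks — is reconciling \emph{when} the contraction takes effect with \emph{where} asymptotic expansiveness must be tested. The uniform contraction only guarantees $r$-closeness after at least $k_r$ forward iterates, whereas the conclusion $z=x$ is a statement about time $0$; contracting directly from the present is therefore useless. The trick that resolves this is the backward shift above: because $z\in\Gamma_\eps(x)$ keeps the entire orbit $\eps$-shadowing that of $x$, I may legitimately start the contraction $k_r$ steps in the past and let it bring the pair together precisely at the present instant. I would also remark that running the identical computation with arbitrarily large backward shifts actually yields $\Gamma_\eps(x)=\{x\}$, so $f$ is in fact expansive; asymptotic expansiveness is simply the weaker statement being recorded here.
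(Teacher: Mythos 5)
Your proof is correct and uses essentially the same mechanism as the paper's: exploit the shift-invariance of the two-sided $\eps$-closeness to place $f^{-k_r}(z)$ in $W^s_\eps(f^{-k_r}(x))$, then apply the forward uniform contraction so that the $r$-closeness lands exactly at time $0$; the paper merely phrases this as a contradiction with a fixed pair $y\neq z$ and $r=d(z,y)$, while you run it directly for arbitrary $r$. Your closing remark is also accurate: since the argument only uses $z\in\Gamma_\eps(x)$ and never the asymptotic convergence, uniform contractions in fact yield expansiveness, of which the stated proposition is a weaker consequence.
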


\begin{proof}
Let $\eps>0$ be given by the uniform contraction in the local stable/unstable sets of $f$ and assume by contradiction the existence of $z\in X$ and 
$$y\in V^s_{\eps}(z)\cap V^u_{\eps}(z)\setminus\{z\}.$$ Let $r=d(z,y)>0$ and choose $k_{\frac{r}{2}}\in\N$ such that
$$f^n(W^s_\eps(x))\subset W^s_{\frac{r}{2}}(f^n(x)) \,\,\,\,\,\, \text{and} \,\,\,\,\,\, f^{-n}(W^u_\eps(x))\subset W^u_{\frac{r}{2}}(f^{-n}(x))$$
for every $x\in X$ and $n\geq k_{\frac{r}{2}}$. Thus, if $y'=f^{-k_r}(y)$ and $z'=f^{-k_r}(z)$, then $y'\in V^s_{\eps}(z')\cap V^u_{\eps}(z')\setminus\{z'\}$ and
$$d(f^{k_{\frac{r}{2}}}(y'),f^{k_{\frac{r}{2}}}(z'))=d(z,y)=r>\frac{r}{2},$$
contradicting the choice of $k_{\frac{r}{2}}$.
\end{proof}

Thus, homeomorphisms with shadowing and an expansive non-wandering set that are not expansive (in the whole space) are necessarily not asymptotically expansive and, consequently, do not have uniform contractions in its local stable/unstable sets. This illustrates the importance of Theorem \ref{uniformity} which ensures the existence of points in $\Gamma_{\eps,\underline{k}}(x,y)$
for every $x,y\in X$ with $d(x,y)<\delta$, even when there is no uniform contraction on the whole set $V^s_{\eps}(x)\cap V^u_{\eps}(y)$.

\vspace{+0.4cm}

\hspace{-0.4cm}\textbf{Continuum-wise hyperbolic homeomorphisms}. The continuum-wise hyperbolic homeomorphisms are defined by considering the generalization of expansiveness, called continuum-wise expansiveness, and a form of local-product-structure that is weaker than the one existing in the hyperbolic case, called cw-local-product-structure. 

\begin{definition}[Continuum-wise expansiveness]
We say that a homeomorphism $f\colon X\to X$ of a compact metric space is \emph{continuum-wise expansive}, or simply cw-expansive, if there exists $c>0$ such that $\Gamma_{c}(x)$ is totally disconnected for every $x\in X$. The number $c$ is called a \emph{cw-expansivity constant} of $f$.
\end{definition}

Cw-expansiveness was first considered by Kato in \cite{Kato2} and \cite{Kato1} where some of the chaotic properties, as well as some examples, of cw-expansive homeomorphisms were discussed.

\begin{definition}[Local stable/unstable continua]
We denote by $C^s_c(x)$ the $c$-stable continuum of $x$, that is, the connected component of $x$ in $W^s_{c}(x)$, and by $C^u_c(x)$ the $c$-unstable continuum of $x$, that is, the connected component of $x$ in $W^u_{c}(x)$.
\end{definition}

In the following lemma, which is based on \cite{Kato1}*{Proposition 2.1}, we obtain uniform contractions in the local stable/unstable continua of a cw-expansive homeomorphism.

\begin{lemma}\label{LemaManeversaocw}
If $f\colon X\to X$ is a cw-expansive homeomorphism of a compact metric space and $2\eps$ is a cw-expansive constant of $f$, then for each $r>0$ there is $k_r\in\N$ such that \[\diam(f^n(C_\varepsilon^s(x)))\leq r\,\,\,\,\,\, \text{and} \,\,\,\,\,\, \diam(f^{-n}(C^u_\eps(x)))\leq r\] for every $x\in X$ and $n\geq k_r$.
\end{lemma}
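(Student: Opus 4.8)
The plan is to argue by contradiction, establishing the stable assertion first; the unstable one then follows by applying the stable result to $f^{-1}$, since $C^u_\eps(x)$ for $f$ is exactly $C^s_\eps(x)$ for $f^{-1}$ and the dynamical ball $\Gamma_{2\eps}$ is unchanged under inversion, so $2\eps$ remains a cw-expansive constant for $f^{-1}$. Suppose then the stable claim fails: there is $r>0$ such that for every $k\in\N$ one can choose $x_k\in X$ and $n_k\geq k$ with $\diam(f^{n_k}(C^s_\eps(x_k)))>r$. Set $D_k:=f^{n_k}(C^s_\eps(x_k))$; since $C^s_\eps(x_k)$ is a connected component of the closed set $W^s_\eps(x_k)$, it is a subcontinuum of $X$, and so is its continuous image $D_k$, which has diameter exceeding $r$.

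The key observation is that these continua have all their iterates, from time $-n_k$ onward, of diameter at most $2\eps$. Indeed, every $y\in C^s_\eps(x_k)$ satisfies $d(f^m(y),f^m(x_k))\leq\eps$ for all $m\geq0$, so $\diam(f^j(C^s_\eps(x_k)))\leq 2\eps$ for every $j\geq0$. Writing $f^m(D_k)=f^{n_k+m}(C^s_\eps(x_k))$ and noting that $n_k+m\geq0$ precisely when $m\geq -n_k$, I obtain $\diam(f^m(D_k))\leq 2\eps$ for all $m\geq -n_k$.

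Next I would pass to a limit in the hyperspace of subcontinua of $X$, which is a closed, hence compact, subset of $(2^X,d_H)$ because a Hausdorff limit of continua is again a continuum. Extracting a convergent subsequence $D_{k_i}\to D$, the continuity of $\diam$ on $(2^X,d_H)$ gives $\diam(D)=\lim_i\diam(D_{k_i})\geq r>0$, so $D$ is nondegenerate. Moreover, fixing any $m\in\Z$, continuity of the induced map $2^{f^m}$ gives $f^m(D_{k_i})\to f^m(D)$; since $n_{k_i}\to\infty$, for all large $i$ we have $m\geq -n_{k_i}$ and hence $\diam(f^m(D_{k_i}))\leq 2\eps$, so in the limit $\diam(f^m(D))\leq 2\eps$. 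Thus $\diam(f^m(D))\leq 2\eps$ for every $m\in\Z$.

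Finally, choosing any $p\in D$, every $q\in D$ satisfies $d(f^m(q),f^m(p))\leq\diam(f^m(D))\leq 2\eps$ for all $m\in\Z$, so $D\subseteq W^s_{2\eps}(p)\cap W^u_{2\eps}(p)=\Gamma_{2\eps}(p)$. But $D$ is a nondegenerate continuum, hence not totally disconnected, so $\Gamma_{2\eps}(p)$ contains a nondegenerate connected set and is therefore not totally disconnected, contradicting that $2\eps$ is a cw-expansive constant. I expect the main obstacle to be this limiting step: one must ensure both that connectedness and nondegeneracy survive the Hausdorff limit and that the uniform bound $2\eps$ passes to every negative iterate — this is exactly where $n_k\to\infty$ is used and is the heart of the Kato-type argument, mirroring Mañé's uniform-contraction lemma in the expansive (point) case.
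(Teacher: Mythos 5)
Your proof is correct and follows essentially the same route as the paper's: a contradiction argument that extracts a Hausdorff limit of the iterated stable continua $f^{n_k}(C^s_\eps(x_k))$, obtaining a nondegenerate continuum whose every iterate has diameter at most $2\eps$, which contradicts cw-expansivity since it lies in a dynamical ball $\Gamma_{2\eps}(p)$. You simply spell out the details (compactness of the hyperspace of continua, continuity of $\diam$ and of the induced maps) that the paper's terser proof leaves implicit.
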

\begin{proof}
We prove the statement for the stable case and note that the unstable case follows in a similar way. Suppose by contradiction that there exists $r>0$ such that for each $n\in\N$ there exist $x_n\in X$ and $k_n\geq n$ such that $\diam(f^{k_n}(C_\varepsilon^s(x_n)))>r$. Let $C$ be a continuum that is the limit of a subsequence $(f^{k_{n_j}}(C_\varepsilon^s(x_{n_j})))_{j\in\N}$. Then $C$ is a continuum with $\diam(C)\geq r>0$ that is both $2\eps$-stable and $2\eps$-unstable, since for each $i\in\Z$ we can choose $j\in\N$ sufficiently large so that $i+k_{n_j}>0$ and conclude that
$$\diam(f^i(C))=\lim_{j\to\infty}\diam(f^i(f^{k_{n_j}}(C_\varepsilon^s(x_{n_j}))))\leq2\eps.$$
This contradicts that $2\eps$ is a cw-expansive constant of $f$ and finishes the proof.
\end{proof}
Note that this proves, in particular, that
$$C^s_\eps(x)\subset V^s_{\eps}(x) \,\,\,\,\,\, \text{and} \,\,\,\,\,\, C^u_\eps(x)\subset V^u_{\eps}(x)$$
for every $x\in X$, which in turn implies
$$C^s_\eps(x)\cap C^u_\eps(y)\subset V^s_{\eps}(x)\cap V^u_{\eps}(y)$$ for every $x,y\in X$.

\begin{definition}[Continuum-wise hyperbolicity]
We say that $f$ satisfies the $cw$-local-product-structure if for each $\eps>0$ there exists $\delta>0$ such that $$C^s_\eps(x)\cap C^u_\eps(y)\neq \emptyset \,\,\,\,\,\, \text{ whenever } \,\,\,\,\,\, d(x,y)< \delta.$$ The $cw$-expansive homeomorphisms satisfying the $cw$-local-product-structure are called $cw$-hyperbolic.
\end{definition}

The class of cw-hyperbolic homeomorphisms contains all topologically hyperbolic homeomorphisms, when defined on a Peano continuum, and also the Walter's pseudo-Anosov diffeomorphism of the Sphere $\mathbb{S}^2$ \cite{MR648108}*{Example 1, p. 140}. For more information on cw-hyperbolicity, see \cite{ArrudaCarvalhoSarmiento2024}, \cite{artigue2024continuum}, \cite{CD}, and \cite{CarvalhoRigo2023}.

Now, let $\eps>0$ be such that $2\eps$ is a cw-expansivity constant of $f$ and $\delta\in(0,\eps)$ be given by the cw-local-product-structure of $f$ for this $\eps$. Given $r>0$, we can choose $k_r=N\in\N$ as in Lemma \ref{LemaManeversaocw} to obtain 
$$C^s_{\eps}(x)\cap C^u_{\eps}(y)\subset \Gamma_{\eps,\underline{k}}(x,y)$$
for every $x,y\in X$ such that $d(x,y)<\delta$, where $\underline{k}=(k_r)_{r>0}$ was chosen above. This means that the conclusion of Theorem \ref{uniformity} holds for every point in $C^s_{\eps}(x)\cap C^u_{\eps}(y)$. 

\begin{remark} We note that the sets $W^s_\eps(x)$ and $C^s_\eps(x)$ (similarly $W^u_\eps(x)$ and $C^u_\eps(x)$) can differ a lot. While, on surfaces, $C^s_\eps(x)$ is an arc (assuming cw$_F$-hyperbolicity \cite{ArrudaCarvalhoSarmiento2024}), $W^s_\eps(x)$ can be non-locally-connected and contain a Cantor set of distinct arcs. At least a countable set of these distinct arcs are in $V^s_\eps(x)$ (this can be seen following the proof of \cite{ArtigueDend}*{Proposition 2.2.2}) which ensures that the set $V^s_{\eps}(x)\cap V^u_{\eps}(y)$ can be much larger than $C^s_{\eps}(x)\cap C^u_{\eps}(y)$. Thus, the uniform contraction explained above could fail on the whole set $V^s_{\eps}(x)\cap V^u_{\eps}(y)$.
\end{remark}

\vspace{+0.4cm}

\hspace{-0.45cm}\textbf{Shift map on the Hilbert cube}. Let $X$ be a compact metric space and consider the backward shift map $\sigma\colon X^{\mathbb{Z}}\rightarrow X^{\mathbb{Z}}$ defined by
\[\sigma((x_i)_{i\in\mathbb{Z}})= (x_{i+1})_{i\in\mathbb{Z}}.\]
If $X$ is a finite set, then the shift map is topologically hyperbolic and is included in the first class of this section. We now discuss the case $X=[0,1]$, that is, $\sigma$ is the shift map in the Hilbert cube. Consider on $[0,1]^\mathbb{Z}$ the following metric: for $x=(x_i)_{i\in\mathbb{Z}},y=(y_i)_{i\in\mathbb{Z}}\in[0,1]^{\mathbb{Z}}$, let \[d(x,y)=\sum_{i\in\mathbb{Z}}\dfrac{|x_i-y_i|}{2^{|i|}}.\] 
We begin proving that the shift map does not have the uniform asymptotic local-product-structure, which is defined similarly to Definition \ref{unif} changing the local stable/unstable sets by the asymptotic local stable/unstable sets.

\begin{proposition}
The shift map $\sigma\colon [0,1]^{\mathbb{Z}}\to [0,1]^{\mathbb{Z}}$ does not have a uniform asymptotic local-product-structure.
\end{proposition}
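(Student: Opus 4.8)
The plan is to argue by contradiction at the fixed point $\mathbf{0}=(\dots,0,0,\dots)$ of $\sigma$, producing for any prescribed rate a point of $V^s_\eps(\mathbf{0})\cap V^u_\eps(\mathbf{0})$ whose forward orbit converges to $\mathbf{0}$ too slowly to be governed by that rate. The mechanism is that a single nonzero coordinate sitting at position $M$ only leaves the ``central'' window of the metric after about $M$ iterations, so it cannot obey any contraction rate fixed in advance.

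First I would unwind the weighted metric. Since $\sigma^k((z_i)_i)=(z_{i+k})_i$, one has $d(\sigma^k(z),\mathbf{0})=\sum_{j\in\Z}2^{-|j-k|}|z_j|$, and the elementary bounds $|z_k|\le d(\sigma^k(z),\mathbf{0})\le\sum_{|i|>N}2^{-|i|}+3\max_{|j-k|\le N}|z_j|$ identify $W^s(\mathbf{0})=\{z:|z_j|\to0\text{ as }j\to+\infty\}$ and $W^u(\mathbf{0})=\{z:|z_j|\to0\text{ as }j\to-\infty\}$, while $z\in W^s_\eps(\mathbf{0})\cap W^u_\eps(\mathbf{0})$ holds exactly when $d(\sigma^k(z),\mathbf{0})\le\eps$ for all $k\in\Z$. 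For the specific witness below these verifications are immediate, so this general description serves only as orientation.

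Now fix $\eps\in(0,1]$ and suppose, toward a contradiction, that $\sigma$ has a uniform asymptotic local-product-structure; let $\delta>0$ and $\underline{k}=(k_r)_{r>0}$ be as in the (asymptotic analogue of the) definition. Applying the inclusion $V^s_\eps(x)\cap V^u_\eps(y)\subset\Gamma_{\eps,\underline{k}}(x,y)$ with $x=y=\mathbf{0}$ is legitimate since $d(\mathbf{0},\mathbf{0})=0<\delta$. Put $r=\eps/2$, choose an integer $M\ge k_r$, and let $z$ be the sequence with $z_M=\eps$ and $z_j=0$ for $j\ne M$. Because $z$ has a single nonzero coordinate it lies in $W^s(\mathbf{0})\cap W^u(\mathbf{0})$, and $d(\sigma^k(z),\mathbf{0})=2^{-|M-k|}\eps\le\eps$ for every $k$ gives $z\in W^s_\eps(\mathbf{0})\cap W^u_\eps(\mathbf{0})$; hence $z\in V^s_\eps(\mathbf{0})\cap V^u_\eps(\mathbf{0})$.

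Finally I would read off the contradiction: membership $z\in\Gamma_{\eps,\underline{k}}(\mathbf{0},\mathbf{0})$ would force $\sigma^{k_r}(z)\in V^s_r(\mathbf{0})$, that is $d(\sigma^n(z),\mathbf{0})\le r$ for all $n\ge k_r$; but at $n=M\ge k_r$ we have $d(\sigma^M(z),\mathbf{0})=\eps>\eps/2=r$. Thus the inclusion into $\Gamma_{\eps,\underline{k}}$ fails for every choice of $\delta$ and $\underline{k}$, denying the uniform asymptotic local-product-structure. The only points requiring care are the quantifier bookkeeping (one fixes a single $\eps$, and the witness $z$ is permitted to depend on the fixed $\underline{k}$ through the choice $M\ge k_r$) and the passage from the weighted metric to coordinatewise conditions; I expect no genuine obstacle beyond this, since the whole phenomenon is simply the $\approx M$-step delay before a bump at position $M$ is contracted.
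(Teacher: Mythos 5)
Your proof is correct. The mechanism is exactly the one the paper exploits --- a bump in a single coordinate at position $M\geq k_r$ is nearly invisible to the weighted metric until time $\approx M$, at which point it contributes its full height $\eps>r$, so no rate $\underline{k}$ fixed in advance can be obeyed --- but you specialize to the pair $x=y=\mathbf{0}$ at the fixed point, which collapses every verification to the one-line identity $d(\sigma^k(z),\mathbf{0})=\eps\,2^{-|M-k|}$ and needs only a single forward bump. The paper instead runs the construction at an \emph{arbitrary} pair $x,y$ with $d(x,y)<\delta$: its witness $z^m$ copies the future of $x$ and the past of $y$, inserts bumps of height $\tfrac{1}{n}$ (with $r<\tfrac{1}{n}<\tfrac{\delta}{2}$) at positions $m$ and $-m$, and the membership estimates then carry error terms coming from $d(x,y)$. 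What the paper's version buys is a stronger conclusion: the inclusion $V^s_\eps(x)\cap V^u_\eps(y)\subset\Gamma_{\eps,\underline{k}}(x,y)$ fails at \emph{every} nearby pair and in both time directions, not just at one point. What your version buys is brevity and transparency, and it is logically sufficient, since refuting the uniform asymptotic local-product-structure only requires, for each $\delta$ and $\underline{k}$, one pair at distance less than $\delta$ where the inclusion fails. Your quantifier bookkeeping is also sound: you fix a single $\eps$, let $\delta$ and $\underline{k}$ be arbitrary, take $x=y=\mathbf{0}$ (so $d(x,y)<\delta$ trivially), and allow the witness to depend on $\underline{k}$ through the choice $M\geq k_{\eps/2}$ --- precisely the negation of the definition.
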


\begin{proof}
For each $0<\varepsilon<\frac{1}{2}$, $\delta\in(0,\varepsilon/2)$, $\underline{k}=(k_r)_{r>0}\subset\N$ with $k_r\rightarrow\infty$ when $r\to0^+$, $x=(x_i)_{i\in\mathbb{Z}}\in X$, and $y=(y_i)_{i\in\mathbb{Z}}\in X$ with $d(x,y)<\delta$, we exhibit points in $V_\varepsilon^s(x)\cap V_\varepsilon^u(y)$, which do not belong to $\Gamma_{\varepsilon,\underline{k}}(x,y)$. For some fixed $r<\frac{\delta}{2}$ sufficiently small, we can choose $n\in\mathbb{N}$ such that $r<\frac{1}{n}<\frac{\delta}{2}$. For each $m>n$, define the sequence $z^m=(z^m_i)_{i\in\mathbb{Z}}$ by
\[
z^m_i=
\begin{cases}
x_i, & \text{if } i\geq 0 \text{ and } i\neq m,\\[4pt]
x_i +\dfrac{1}{n}, & \text{if } i=m \text{ and } x_i\leq \dfrac{1}{2},\\[6pt]
x_i -\dfrac{1}{n}, & \text{if } i=m \text{ and } x_i>\dfrac{1}{2},\\[6pt]
y_i, & \text{if } i<0 \text{ and } i\neq -m,\\[4pt]
y_i +\dfrac{1}{n}, & \text{if } i=-m \text{ and } y_i\leq \dfrac{1}{2},\\[6pt]
y_i -\dfrac{1}{n}, & \text{if } i=-m \text{ and } y_i>\dfrac{1}{2}.
\end{cases}
\]
Note that $z^m\in V_\eps^s(x)\cap V_\eps^u(y)$ for every $m>n$ since for each $k\in\mathbb{N}$ we have
\[\begin{array}{rcl}
 d(\sigma^k(x),\sigma^k(z^m)) & = & \displaystyle \sum_{i\in\mathbb{Z}}\dfrac{|x_{i}-z_{i}^m|}{2^{|k-i|}}\\
&=& \displaystyle\dfrac{|x_{-m}-y_{-m}\pm\frac{1}{n}|}{2^{k+m}}+\frac{\frac{1}{n}}{2^{|k-m|}}+\sum_{i<0,\;i\neq -m}\dfrac{|x_i-y_i|}{2^{k-i}}\\
&=&\displaystyle\dfrac{|x_{-m}-y_{-m}\pm\frac{1}{n}|}{2^{k+m}}+\dfrac{1}{n2^{|k-m|}}+\dfrac{1}{2^k}\sum_{i<0,\;i\neq -m}\dfrac{|x_i-y_i|}{2^{|i|}}\\
&\leq&\displaystyle\dfrac{\delta}{2^k}+\dfrac{2}{n2^{|k-m|}}+\dfrac{\delta}{2^k}\;\;<\;\;\eps,\\
\end{array}\] 
and, analogously,
\[\begin{array}{rcl}d(\sigma^{-k}(y),\sigma^{-k}(z^m)) &=&\displaystyle\dfrac{|y_{m}-x_{m}\pm\frac{1}{n}|}{2^{|k-m|}}+\dfrac{1}{n2^{|k-m|}}+\dfrac{1}{2^k}\sum_{i\geq0,\;i\neq m}\dfrac{|y_i-x_i|}{2^{|i|}}\\
&\leq&\dfrac{\delta}{2^k}+\dfrac{2}{n2^{|k-m|}}+\displaystyle\dfrac{\delta}{2^k}\;\;<\;\;\eps.
\end{array}\]
These distances converge to zero when $k\to\infty$ since $z^m_i=x_i$ for $i>m$ and $z^m_i=y_i$ for $i<-m$. Also, for each $m>n$ we have
\[d(\sigma^m(x),\sigma^m(z^m))\geq \dfrac{1}{n}>r.\] The last inequality implies that if $k_r>n$, for $m=k_r$, we have
$z^{k_r}\notin \Gamma_{\eps,\underline{k}}(x,y)$.
\end{proof}

Differently from the cw-expansive case, the shift map has local stable/unstable continua that are not stable/unstable (see \cite{MR4954579}*{Remark 3.11}). We prove below the existence of continua inside each local stable/unstable set with uniform diameter and uniform contractions. For each $\eps\in(0,1/4)$ and $x=(x_i)_{i\in\Z}\in [0,1]^{\mathbb{Z}}$, consider \[D^s_{\eps}(x)= \prod_{i<0} \left([x_i-\eps,x_i+\eps]\cap[0,1]\right)\times \prod_{i\geq 0}\{x_i\} \] and  \[D^u_{\eps}(x)= \prod_{i<0} \{x_i\}\times \prod_{i\geq 0}\left([x_i-\eps,x_i+\eps]\cap[0,1]\right). \]
Note that 
    \[\diam(D^s_{\eps}(x)) = \sum_{i<0}\frac{\diam([x_i-\eps,x_i+\eps]\cap[0,1])}{2^{|i|}}\] and \[\diam(D^u_{\eps}(x)) = \sum_{i\geq 0}\frac{\diam([x_i-\eps,x_i+\eps]\cap[0,1])}{2^{|i|}},\]
which ensures that $\eps\leq\diam(D^s_{\eps}(x))\leq4\eps$ and $\eps\leq\diam(D^u_{\eps}(x))\leq4\eps$.

    \begin{proposition}\label{sigmaunif}
        For each $r\in(0,\eps)$, there is $k_r\in\N$ such that \begin{equation}\label{uniformcontraction}
            \diam(\sigma^n(D^s_{\eps}(x)))\leq r\,\,\,\,\,\, \text{and} \,\,\,\,\,\, \diam(\sigma^{-n}(D^u_{\eps}(x)))\leq r
        \end{equation} for every $x\in [0,1]^{\mathbb{Z}}$ and $n\geq k_r$.
    \end{proposition}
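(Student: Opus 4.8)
The plan is to establish directly that these product cells contract at a uniform exponential rate, with the crucial uniformity in the base point $x$ coming for free from a reindexing of the defining series.

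First I would fix $r\in(0,\eps)$ and take two arbitrary points $w=(w_i)_{i\in\Z}$ and $w'=(w'_i)_{i\in\Z}$ in $D^s_\eps(x)$. From the definition of $D^s_\eps(x)$ they coincide on every coordinate $i\geq 0$ (both equal $x_i$), while on each coordinate $i<0$ both lie in the same interval $[x_i-\eps,x_i+\eps]\cap[0,1]$, so that $|w_i-w'_i|\leq 2\eps$ for $i<0$ and $|w_i-w'_i|=0$ for $i\geq 0$.

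Next I would expand $d(\sigma^n(w),\sigma^n(w'))$ using the explicit metric and reindex the sum by $j=i+n$, which turns the weight $2^{-|i|}$ into $2^{-|j-n|}$. The decisive observation is that only the indices $j<0$ survive (the remaining terms vanish), and for such $j$ with $n\geq 0$ one has $|j-n|=n+|j|$; summing the resulting geometric series $\sum_{m\geq 1}2^{-m}=1$ then yields the clean uniform bound
$$\diam(\sigma^n(D^s_\eps(x)))\leq \frac{2\eps}{2^n},$$
valid for every $x\in[0,1]^{\Z}$. Finally I would choose $k_r\in\N$ with $2\eps/2^{k_r}\leq r$, so that $\diam(\sigma^n(D^s_\eps(x)))\leq r$ for all $n\geq k_r$ and all $x$. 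The unstable estimate is entirely symmetric: for $w,w'\in D^u_\eps(x)$ only the coordinates $j\geq 0$ contribute, and applying $\sigma^{-n}$ with the reindexing $j=i-n$ produces the analogous bound $4\eps/2^n$ (the constant being $4$ rather than $2$ because $\sum_{j\geq 0}2^{-j}=2$); enlarging $k_r$ if necessary handles both inequalities at once.

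I do not expect any genuine obstacle here, as the contraction reduces to a one-line geometric-series estimate. The only point deserving care is verifying that the bound is truly independent of the base point $x$, and this is precisely what makes the example noteworthy: it exhibits a uniformly contracting family of continua inside the local stable and unstable sets, even though, as shown in the preceding proposition, the shift $\sigma$ possesses no uniform asymptotic local-product-structure. In this way the cells $D^s_\eps(x)$ and $D^u_\eps(x)$ play here the role that the local stable/unstable continua $C^s_\eps(x)$ and $C^u_\eps(x)$ played in the cw-hyperbolic case.
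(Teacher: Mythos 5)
Your proof is correct and takes essentially the same route as the paper: a direct computation of the weighted-series distance under $\sigma^n$ (resp.\ $\sigma^{-n}$), a geometric-series bound giving $2\eps/2^n$ for $D^s_\eps(x)$ and $4\eps/2^n$ for $D^u_\eps(x)$ uniformly in $x$, and the choice of $k_r$ with $4\eps/2^{k_r}\leq r$. The only cosmetic difference is that the paper writes $\diam(\sigma^n(D^s_\eps(x)))$ as an exact weighted sum of the factor diameters and bounds each term by $2\eps$, whereas you bound the distance between two arbitrary points of the cell; the estimates and conclusion are identical.
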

    \begin{proof}
    For each $n\in\mathbb{N}$ we have
    \begin{equation}\label{diamDs}
        \diam(\sigma^n(D^s_{\eps}(x))) = \sum_{i<0}\frac{\diam([x_i-\eps,x_i+\eps]\cap[0,1])}{2^{|i|+n}}\leq \dfrac{2\eps}{2^n}=\dfrac{\eps}{2^{n-1}}\end{equation} and
    \begin{equation}\label{diamDu}\diam(\sigma^{-n}(D^u_{\eps}(x))) = \sum_{i\geq 0}\frac{\diam([x_i-\eps,x_i+\eps]\cap[0,1])}{2^{i+n}}\leq \dfrac{4\eps}{2^n}=\dfrac{\eps}{2^{n-2}}.\end{equation} 
For each $r\in(0,\eps)$, consider $k_r\in\N$ such that \begin{equation*}\label{kreq}\dfrac{\eps}{2^{k_r-2}}<r.\end{equation*} Thus, by (\ref{diamDs}) and (\ref{diamDu}), the continua $D^s_{\eps}(x)$ and $D^u_{\eps}(x)$ satisfy (\ref{uniformcontraction}) for every for $x\in [0,1]^{\mathbb{Z}}$ and $n\geq k_r$.
\end{proof}

We now prove that $\sigma$ has a local-product-structure between the local stable/unstable continua $D^s_{\eps}(x)$ and $D^u_{\eps}(y)$. Let $\underline{k}=(k_r)_{r>0}$ be as in Proposition \ref{sigmaunif}.

\begin{proposition}
For each $\eps>0$ there exists $\delta\in(0,\eps)$ such that if $d(x,y)<\delta$, then $D^s_{\eps}(x)\cap D^u_{\eps}(y)\neq\emptyset$. Also, $D^s_{\eps}(x)\cap D^u_{\eps}(y)\subset\Gamma_{\eps,\underline{k}}(x,y)$.
\end{proposition}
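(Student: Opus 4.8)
The plan is to write down the only possible intersection point explicitly and then run it through the uniform contraction of Proposition \ref{sigmaunif}. Define $z=(z_i)_{i\in\Z}$ by $z_i=x_i$ for $i\geq 0$ and $z_i=y_i$ for $i<0$, i.e. the gluing of the future of $x$ with the past of $y$. Since $D^s_\eps(x)$ pins every coordinate $i\geq 0$ to $x_i$ while $D^u_\eps(y)$ pins every coordinate $i<0$ to $y_i$, any element of $D^s_\eps(x)\cap D^u_\eps(y)$ must coincide with $z$; so the nonemptiness claim is precisely the assertion that this particular $z$ lies in both continua. Unwinding the products, $z\in D^s_\eps(x)$ reduces to $y_i\in[x_i-\eps,x_i+\eps]$ for $i<0$, and $z\in D^u_\eps(y)$ reduces to $x_i\in[y_i-\eps,y_i+\eps]$ for $i\geq 0$; jointly these are the coordinatewise bounds $|x_i-y_i|\leq\eps$ for every $i\in\Z$. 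I would therefore isolate these bounds as the single fact to be extracted from $d(x,y)<\delta$.

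Granting $z\in D^s_\eps(x)\cap D^u_\eps(y)$, the inclusion into $\Gamma_{\eps,\underline{k}}(x,y)$ is the routine half and uses only what is already in hand. A direct estimate shows that any $w\in D^s_\eps(x)$ satisfies $d(\sigma^k(w),\sigma^k(x))\leq\eps\,2^{-k}$ for all $k\geq 0$, so $D^s_\eps(x)\subset V^s_\eps(x)$, and symmetrically $D^u_\eps(y)\subset V^u_\eps(y)$; in particular $z\in V^s_\eps(x)\cap V^u_\eps(y)$, which is the first clause in the definition of $\Gamma$. For the second clause, fix $r>0$ and take the associated $k_r$ from Proposition \ref{sigmaunif}. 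Because $x,z\in D^s_\eps(x)$, for every $n\geq 0$ both $\sigma^{n}(\sigma^{k_r}(z))$ and $\sigma^{n}(\sigma^{k_r}(x))$ lie in $\sigma^{n+k_r}(D^s_\eps(x))$, whose diameter is at most $\diam(\sigma^{k_r}(D^s_\eps(x)))\leq r$ and tends to $0$ as $n\to\infty$ by \reff{diamDs}; hence $\sigma^{k_r}(z)\in V^s_r(\sigma^{k_r}(x))$. The mirror argument with \reff{diamDu} gives $\sigma^{-k_r}(z)\in V^u_r(\sigma^{-k_r}(y))$, and as $r>0$ was arbitrary we obtain $z\in\Gamma_{\eps,\underline{k}}(x,y)$, which is the claimed inclusion.

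The step I expect to be the main obstacle is thus the nonemptiness, namely forcing the coordinatewise estimate $|x_i-y_i|\leq\eps$ for all $i$ out of the single scalar inequality $d(x,y)<\delta$. For bounded indices the metric hands this over directly: $|x_i-y_i|\leq 2^{|i|}d(x,y)<2^{|i|}\delta$, so fixing $N$ with $2^{N}\delta\leq\eps$ controls every coordinate with $|i|\leq N$. The genuine difficulty lives in the tail $|i|>N$, where the weight $2^{-|i|}$ makes $d(x,y)<\delta$ essentially silent and only the trivial bound $|x_i-y_i|\leq 1$ survives. This is exactly the tension between the coordinatewise (product) description of $D^s_\eps$ and $D^u_\eps$ and the fast-decaying weights of $d$, and it is where I would concentrate the work: either by pinning $\delta$ to the geometry of the two continua, or by checking whether membership in $D^s_\eps,D^u_\eps$ is to be read through the induced asymptotic stable/unstable conditions rather than through strict coordinatewise containment, so that the negligible tail contributions do not obstruct the intersection.
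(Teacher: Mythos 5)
Your reduction is exactly right, and the inclusion half of your argument (any point of the intersection equals the glued point $z$, lies in $V^s_\eps(x)\cap V^u_\eps(y)$, and inherits the uniform rate from Proposition \ref{sigmaunif}) is correct and is precisely what the paper does. But the obstacle you flagged in your last paragraph is not merely the hard step: it is a genuine failure of the statement as written, with the sets $D^s_\eps$, $D^u_\eps$ read coordinatewise. No choice of $\delta$ can work. Indeed, fix $\eps\in(0,1/4)$ and any $\delta>0$, pick $m\in\N$ with $2^{-m-1}<\delta$, let $x$ be the zero sequence and let $y$ agree with $x$ except $y_m=1/2$. Then $d(x,y)=2^{-m-1}<\delta$, but every $w\in D^s_\eps(x)$ has $w_m=x_m=0$, while every $w\in D^u_\eps(y)$ has $w_m\ge 1/2-\eps>1/4$, so $D^s_\eps(x)\cap D^u_\eps(y)=\emptyset$. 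As you observed, nonemptiness is equivalent to the coordinatewise bound $\sup_{i\in\Z}|x_i-y_i|\le\eps$, and the weighted-sum metric is silent on tail coordinates. The paper's own proof overlooks exactly this point: it sets $\delta=\eps/4$ and simply asserts that $D^s_\eps(x)\cap D^u_\eps(y)=\{z_{(x,y)}\}$, which with that $\delta$ is only guaranteed on the coordinates $|i|\le 2$. So what you found is an error in the paper, not a deficiency of your own argument.

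The repair is essentially your closing suggestion. What the paper actually needs from this proposition (a point of $\Gamma_{\eps,\underline{k}}(x,y)$ whenever $d(x,y)<\delta$, i.e., a uniform asymptotic local-product-structure) is true, with a one-line proof that bypasses the continua entirely: for the glued point $z=z_{(x,y)}$ and every $k\ge 0$,
\[
d(\sigma^k(z),\sigma^k(x))=\frac{1}{2^k}\sum_{j<0}\frac{|x_j-y_j|}{2^{|j|}}\le\frac{d(x,y)}{2^k}<\frac{\delta}{2^k},
\]
and symmetrically $d(\sigma^{-k}(z),\sigma^{-k}(y))\le 2^{-k}\,d(x,y)$; hence $z\in V^s_\eps(x)\cap V^u_\eps(y)$ with an exponential (in particular uniform) rate, and since $\delta<\eps$ the integers $k_r$ of Proposition \ref{sigmaunif} witness $z\in\Gamma_{\eps,\underline{k}}(x,y)$. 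Alternatively, the proposition becomes correct verbatim, by your coordinatewise reduction, if the hypothesis $d(x,y)<\delta$ is replaced by $\sup_{i\in\Z}|x_i-y_i|\le\eps$. The second assertion, $D^s_\eps(x)\cap D^u_\eps(y)\subset\Gamma_{\eps,\underline{k}}(x,y)$, stands as you proved it; it is simply vacuous in examples like the one above.
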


\begin{proof} Let $\delta = \varepsilon/4$ and observe that whenever $x,y \in [0,1]^{\mathbb{Z}}$ satisfy $d(x,y) < \delta$, we have 
\[
D^s_{\eps}(x) \cap D^u_{\eps}(y) = \{ z_{(x,y)} \},
\]
where $z_{(x,y)} = (z_i)_{i \in \mathbb{Z}} \in [0,1]^{\mathbb{Z}}$ is defined by 
\[
z_i =
\begin{cases}
x_i, & \text{if } i \ge 0,\\[4pt]
y_i, & \text{if } i < 0.
\end{cases}
\]
Proposition \ref{sigmaunif} ensures that $z_{(x,y)} \in \Gamma_{\varepsilon,\underline{k}}(x,y)$, that is,
\[
D^s_{\eps}(x) \cap D^u_{\eps}(y) \subset \Gamma_{\varepsilon,\underline{k}}(x,y).
\] This finishes the proof.\end{proof}

\vspace{+0.4cm}

\hspace{-0.4cm}\textbf{L-shadowing without periodic points}. The following example was introduced in \cite{CK} and further explored in \cite{artigue2020beyond} where it was proved to have the L-shadowing property. For each $r>1$ consider the set $\{0,\dots,r-1\}$ endowed with the discrete metric $\rho$, let $\Omega_r=\{0,\dots,r-1\}^{\Z}$ and consider in $\Omega_r$ the Tychonoff product topology. Let $p$ and $q$ be relatively prime integers and $X_{(p,q)}$ be the set of all sequences in $\Omega_{p+q-1}$ whose entries are vertices visited during a bi-infinite walk on the directed graph with two loops, one of length $p$ and one of length $q$ as shown in Figure \ref{figGraph-pq}. 
\begin{center}
\begin{figure}[ht]\label{fig}
\includegraphics{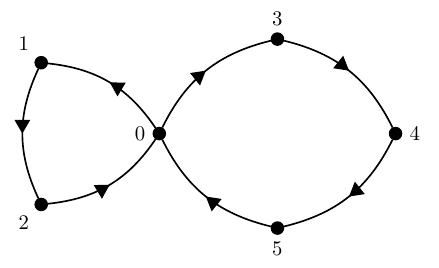}
\caption{A graph presenting the shift space $X_{(3,4)}$.}
\label{figGraph-pq}
\end{figure}
\end{center}
It is clear that $X_{(p,q)}$ is invariant by the shift map on $\Omega_{p+q-1}$ and is a subshift of finite type. 
Let $(p_n)_{n=1}^\infty$ be a strictly increasing sequence of prime numbers. For each $n\in\N$ let $X_n=X_{(p_n,p_{n+1})}$, $\sigma_n$ be the shift transformation on $\Omega_{p_{n}+p_{n+1}-1}$ restricted to $X_n$, and consider the product system $F=\sigma_1\times\sigma_2\times\ldots$ on $X=\prod_{n=1}^\infty X_n$. 
The distance we consider in the space $X$ is defined as follows. First, we clarify some notation: if $a\in X$, then $a=(a_n)_{n\in\N}$ where $a_n\in X_n$ 
for every $n\in\N$, and each $a_n$ is equal to a sequence denoted by $(a_{n,k})_{k\in\Z}$. 
In this way, $a_{n,k}$ denotes the element of position $k$ in the sequence $a_n$, that, in turn, 
is the element of position $n$ of $a$. For each $x,y\in X$, we define
$$d(x,y)=\sum_{n=1}^{+\infty}\frac{d_n(x_n,y_n)}{2^n}$$
where $d_n$ is the metric in $X_n$ defined by $$d_n(x_n,y_n)=\sum_{k\in\Z}\frac{\rho(x_{n,k},y_{n,k})}{2^{|k|}}.$$

\begin{proposition}
The map $F\colon X\to X$ is not asymptotically expansive and does not have uniform contractions on its asymptotic local stable/unstable sets.
\end{proposition}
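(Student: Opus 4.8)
The plan is to exploit the fact that, although each factor $\sigma_n$ is an expansive subshift of finite type, the infinite product $F$ loses asymptotic expansiveness because a difference confined to a single far-out coordinate is automatically small in the product metric. Concretely, since $\rho\le 1$ we have $\sum_{k\in\Z}2^{-|k|}=3$, so the $d_n$-diameter of each $X_n$ is at most $3$, and hence any two points of $X$ differing only in coordinate $n$ stay at distance at most $3\cdot 2^{-n}$ along the whole orbit. Thus the entire problem of realizing a nontrivial asymptotic dynamical ball within a prescribed $\eps$ reduces to producing, inside one factor $X_n$, two distinct bi-asymptotic points, and then placing that difference in a coordinate $n$ so large that $3\cdot 2^{-n}<\eps$.

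For the combinatorial input I would use the description of $X_{(p,q)}$ as bi-infinite walks on the figure-eight graph with loops of lengths $p$ and $q$ meeting at a single junction vertex $0$. A closed walk based at $0$ has length $ap+bq$, and since $q$ traversals of the $p$-loop and $p$ traversals of the $q$-loop both have length $pq$ but visit different vertices, there are two distinct closed walks at $0$ of equal length $pq$. Choosing $x_n\in X_n$ that sits at $0$ at two times $t_1<t_2$ with $t_2-t_1=p_np_{n+1}$ and traverses the $p_n$-loop $p_{n+1}$ times in between, and letting $y_n$ replace that block by $p_n$ traversals of the $p_{n+1}$-loop, yields $y_n\ne x_n$ agreeing with $x_n$ outside the finite window $[t_1,t_2)$. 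Since in a shift $W^s(x_n)$ (resp. $W^u(x_n)$) consists of the sequences equal to $x_n$ at all sufficiently large positive (resp. negative) positions, such a finite-window difference gives $y_n\in W^s(x_n)\cap W^u(x_n)$.

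To prove that $F$ is not asymptotically expansive I would then fix an arbitrary $\eps>0$, pick $n$ with $3\cdot 2^{-n}<\eps$, and define $x,y\in X$ agreeing in every coordinate except the $n$-th, where they equal $x_n,y_n$. For every $k\in\Z$ one has $d(F^k(y),F^k(x))=2^{-n}d_n(\sigma_n^k(y_n),\sigma_n^k(x_n))\le 3\cdot 2^{-n}<\eps$, and this tends to $0$ as $|k|\to\infty$ since $x_n,y_n$ differ only on a finite window. Hence $y\in V^s_\eps(x)\cap V^u_\eps(x)\setminus\{x\}$, and as $\eps>0$ was arbitrary, no constant witnesses asymptotic expansiveness.

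The failure of uniform contractions on the asymptotic local stable/unstable sets then follows by the contrapositive of the argument already used in the earlier proposition, applied to the asymptotic sets: if $F$ had such uniform contractions with constant $\eps$, then for the nontrivial pair $y\in V^s_\eps(x)\cap V^u_\eps(x)$ just produced, putting $r=d(x,y)>0$ and translating by $F^{-k_{r/2}}$ gives $y'=F^{-k_{r/2}}(y)$ and $x'=F^{-k_{r/2}}(x)$ with $y'\in V^s_\eps(x')\cap V^u_\eps(x')$ (the estimates hold for all $k\in\Z$ because $y$ lies in both asymptotic sets), so $F^{k_{r/2}}(y')\in V^s_{r/2}(x)\subset W^s_{r/2}(x)$, forcing $d(x,y)\le r/2<r$, a contradiction; here I only use $V^s_\eps\subset W^s_\eps$ and $V^s_{r/2}\subset W^s_{r/2}$, so that proof closes verbatim. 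The main obstacle is the first combinatorial step, namely verifying that the figure-eight subshift genuinely contains distinct bi-asymptotic points, whereas the localization-in-a-high-coordinate device is the crux that makes the $\eps$-constraint free and forces both conclusions at once.
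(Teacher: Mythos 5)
Your proof is correct, but it handles the second assertion by a genuinely different route than the paper. The first half (failure of asymptotic expansiveness) is essentially the paper's construction: the paper also fixes a coordinate $n_\eps$ with $2^{-n_\eps}<\eps$ and, inside the factor $X_{n_\eps}$, swaps a block of $p_{n_\eps+1}$ traversals of the $p_{n_\eps}$-loop for $p_{n_\eps}$ traversals of the $p_{n_\eps+1}$-loop (both closed walks at $0$ of length $p_{n_\eps}\cdot p_{n_\eps+1}$), leaving every other coordinate untouched -- exactly your loop-swap localized in a high coordinate. Where you diverge is in refuting uniform contractions: the paper does not deduce this abstractly, but builds a one-parameter family $z(i)$, $i\in\N$, in which the swapped block is placed starting at time $ip_{n_\eps}$, so that $z(i)\in V^s_\eps(x)\cap V^u_\eps(x)$ for all $i$ while $d(F^{ip_{n_\eps}}(x),F^{ip_{n_\eps}}(z(i)))\geq r$ for a fixed $r>0$ depending only on $\eps$; since $ip_{n_\eps}$ exceeds any prescribed $k_r$, this refutes uniform contraction directly and, moreover, quantifies the failure (same base point, same separation threshold $r$, at arbitrarily late times), which is the contrast with Theorem B that the paper wants to display. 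You instead take the contrapositive of the earlier proposition (uniform contractions imply asymptotic expansiveness), transplanted from the sets $W^s_\eps/W^u_\eps$ to $V^s_\eps/V^u_\eps$; this adaptation is legitimate, since the paper's own proof of that proposition applies the contraction hypothesis only to a point lying in $V^s_\eps(z')\cap V^u_\eps(z')$, and your remark that the backward translation preserves membership in $V^s_\eps$ precisely because $y$ lies in \emph{both} asymptotic sets is the right subtlety to flag. In short: your argument is shorter and more modular, needing only one bi-asymptotic pair per $\eps$; the paper's argument is longer but produces an explicit quantitative witness of the non-uniformity.
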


\begin{proof}
Given $\varepsilon > 0$, let $n_{\varepsilon} \in \mathbb{N}$ be such that $\frac{1}{2^{n_{\varepsilon}}} < \varepsilon$ and choose $r > 0$ sufficiently small so that
\[
r < \frac{1}{2^{\, n_\varepsilon}}\sum_{n=1}^{p_{n_{\varepsilon}}\cdot p_{n_{\varepsilon}-1}}\displaystyle \frac{ 1}{2^n}.
\]
Consider $x = (x_n)_{n \in \mathbb{N}}$ defined by 
\[
x_n = (\ldots, 0,1,\ldots, p_n - 1, 0,1,\ldots, p_n - 1, \ldots),
\qquad x_{n,0} = 0,
\]
that is, the $n$-th coordinate of $x$ is the bi-infinite sequence consisting of repeated loops of period $p_n$ starting and ending at $0$.
For each $i \in \mathbb{N}$, define $z(i) = (z(i)_n)_{n \in \mathbb{N}}$ as follows:
\[
z(i)_n = 
\begin{cases}
x_n, & \text{if } n \neq n_{\varepsilon},\\[4pt]
(z(i)_{n_{\varepsilon},j})_{j \in \mathbb{Z}}, & \text{if } n = n_{\varepsilon},
\end{cases}
\]
where the coordinate $z(i)_{\, n_{\varepsilon}}$ is given by
\[
z(i)_{n_{\varepsilon},j}=
\begin{cases}
x_{n_{\varepsilon},j}, & \text{if } j \le i p_{n_{\eps}},\\[4pt]
p_{n_{\eps}} + 1, & j = i p_{n_{\eps}} + 1,\\
p_{n_{\eps}} + 2, & j = i p_{n_{\eps}} + 2,\\
\vdots & \\
p_{n_{\eps}+1} - 1, & j = i p_{n_{\eps}} + p_{n_{\eps}+1} - 1,\\[4pt]
0, & j = i p_{n_{\eps}} + p_{n_{\eps}+1},\\
p_{n_{\eps}} + 1, & j = i p_{n_{\eps}} + p_{n_{\eps}+1} + 1,\\
p_{n_{\eps}} + 2, & j = i p_{n_{\eps}} + p_{n_{\eps}+1} + 2,\\
\vdots & \\
p_{n_{\eps}+1} - 1, & j = i p_{n_{\eps}} + p_{n_{\eps}+1} + p_{n_{\eps}+1} - 1,\\
\vdots & \\
0, & j = i p_{n_{\eps}} + p_{n_{\eps}}\cdot p_{n_{\eps}+1} - p_{n_{\eps}+1},\\
p_{n_{\eps}} + 1, & j = i p_{n_{\eps}} + p_{n_{\eps}}\cdot p_{n_{\eps}+1} - p_{n_{\eps}+1} + 1,\\
p_{n_{\eps}} + 2, & j = i p_{n_{\eps}} + p_{n_{\eps}}\cdot p_{n_{\eps}+1} - p_{n_{\eps}+1} + 2,\\
\vdots & \\
p_{n_{\eps}+1} - 1, & j = i p_{n_{\eps}} + p_{n_{\eps}}\cdot p_{n_{\eps}+1} - 1,\\[4pt]
x_{n_{\varepsilon},j}, & j \ge i p_{n_{\eps}} + p_{n_{\eps}}\cdot p_{n_{\eps}+1}.
\end{cases}
\]

Note that all coordinates of $z(i)$ and $x$ coincide, except for the $n_{\varepsilon}$-th coordinate. Indeed, while the $n_{\varepsilon}$-th coordinate of $x$ consists solely of loops of period $p_{n_\eps}$ based at $0$, the coordinate $z(i)_{\, n_{\varepsilon}}$ begins by following 
exactly $i$ loops of length $p_{n_\eps}$, then switches to $p_{n_{\eps}}$ loops of length $p_{n_\eps+1}$ and after that
resumes the loops of length $p_{n_\eps}$, matching the subsequent coordinates of $x_{\, n_{\varepsilon}}$.
We will prove that 
$$z(i) \in V_\varepsilon^s(x) \cap V_\varepsilon^u(x) \,\,\,\,\,\, \text{and} \,\,\,\,\,\, d\bigl(F^{i p_{n_\varepsilon}}(x),\, F^{i p_{n_\varepsilon}}(z(i))\bigr) \ge r $$ 
for every $i \in \mathbb{N}$. First, note that for each $i\in\N$ we have
$$d\bigl(x,z(i)\bigr)=\frac{1}{2^{\, n_\varepsilon}}\sum_{j=1}^{p_{n_\eps}\cdot p_{n_\eps+1} - 1}
        \frac{1}{2^{\, |i p_{n_\eps} + j|}}.$$
Also, for each $n \in \mathbb{N}$, we have
\[
d\bigl(F^{n}(x), F^{n}(z(i))\bigr)
=
\frac{1}{2^{\, n_\varepsilon}}
    \displaystyle 
    \sum_{j=1}^{p_{n_\eps}\cdot p_{n_\eps+1} - 1}
        \frac{1}{2^{\, |i p_{n_\eps} + j - n|}},
\]
which is smaller than $\varepsilon$ and converges to $0$ as $n \to \infty$. Similarly, for each $n\in\mathbb{N}$, we have
\[
d\bigl(F^{-n}(x), F^{-n}(z(i))\bigr)
=
\frac{1}{2^{\, n_\varepsilon}}
    \displaystyle 
    \sum_{j=1}^{p_{n_\eps}\cdot p_{n_\eps+1} - 1}
        \frac{1}{2^{\, |i p_{n_\eps} + j + n|}}
< \frac{\varepsilon}{2^{n}}.
\]
This proves that $z(i) \in V_\varepsilon^s(x) \cap V_\varepsilon^u(x)$ for every $i\in\N$. Also, when $n = i p_{n_\eps}$ we obtain
\[
d\bigl(F^{i p_{n_\eps}}(x), F^{i p_{n_\eps}}(z(i))\bigr)
=
\frac{1}{2^{\, n_\varepsilon}}
    \displaystyle 
    \sum_{j=1}^{p_{n_\eps}\cdot p_{n_\eps+1} - 1}
        \frac{1}{2^{j}}
\ge r.
\]
Thus, for any given sequence $\underline{k} = (k_r)_{r>0}$ with $k_r \to \infty$ as $r \to 0^{+}$, we can choose for each $r>0$ an $i_0\in\N$ so that $i_0p_{n_\eps}>k_r$ and
\[
d\bigl(F^{i_0 p_{n_\varepsilon}}(x),\, F^{i_0 p_{n_\varepsilon}}(z(i_0))\bigr) \ge r,
\]
while still having 
\[
z(i_0) \in V_\varepsilon^s(x) \cap V_\varepsilon^u(x).
\]
This completes the proof.     
\end{proof}

Once more, this illustrates the importance of Theorem \ref{uniformity} and concludes the discussion for all listed examples of homeomorphisms with the L-shadowing property.

\vspace{+0.4cm}

\hspace{-0.45cm}\textbf{Acknowledgments:} Bernardo Carvalho was supported by the CNPq project number 446192/2024. Welington Cordeiro was supported by CNPq grant number 173057/2023-3. Mayara Antunes was supported by CAPES project number 88887.015134/2024-00. The authors thank LNCC for hosting a research visit of Mayara Antunes and Welington Cordeiro where part of this article was developed.

\bibliographystyle{plain}

\bibliography{bibliographyACCC}

\vspace{1.5cm}
\noindent

{\em M. Antunes}
\vspace{0.2cm}

Departamento de Ciências Exatas,

Universidade Federal Fluminense - UFF

Avenida dos trabalhadores, 420, Vila Santa Cecília

Volta Redonda - RJ, Brasil.

\vspace{0.2cm}

\email{mayaraantunes@id.uff.br}
\vspace{1.0cm}

{\em B. Carvalho}
\vspace{0.2cm}

\noindent

National Laboratory for Scientific Computing – LNCC/MCTI 

Av. Getúlio Vargas 333, CEP 25651-070, 

Petrópolis – RJ, Brazil

\vspace{0.2cm}

\email{bmcarvalho@lncc.br}

\vspace{1.0cm}
{\em W. Cordeiro}
\vspace{0.2cm}

\noindent

Instituto de Matem\'atica e Computação, IMC - UNIFEI

Av. B P S, 1303 - Centro, 37500-185 

Itajubá - MG, Brazil




\vspace{0.2cm}

\email{welingtonscordeiro@gmail.com }

\end{document}